\DeclareMathSymbol{\subsetneqq}{\mathbin}{AMSb}{36}
\newcommand{\R}{\mathbb{R}}
\newcommand{\Z}{\mathbb{Z}}
\newcommand{\N}{\mathbb{N}}
\newtheorem{th1}{{\bf Theorem}}[section]
\newtheorem{thm}[th1]{{\bf Theorem}}
\newtheorem{lem}[th1]{{\bf Lemma}}
\newtheorem{prop}[th1]{{\bf Proposition}}
\newtheorem{cor}[th1]{{\bf Corollary}}
\newtheorem{rem}[th1]{\bf Remark}
\title{Double logarithmic inequality with a sharp constant}
\author[M.~Majdoub]{Mohamed Majdoub}
\address{Universit\'e de Tunis El Manar,
Facult\'e des Sciences de Tunis, D\'epartement de Math\'ematiquess, 2092, Tunis, Tunisie}
\email{\sl mohamed.majdoub@fst.rnu.tn}
\thanks{M. M. is grateful to the Laboratory of
PDE and Applications at the Faculty of Sciences of Tunis}
\author[T.~Saanouni]{Tarek Saanouni}
\address{Universit\'e de Tunis El Manar,
Facult\'e des Sciences de Tunis, D\'epartement de Math\'ematiquess, 2092, Tunis, Tunisie}
\email{\sl Tarek.saanouni@ipeiem.rnu.tn}
\thanks{T. S. is grateful to the Laboratory of PDE and Applications at the Faculty of Sciences of Tunis}
\title{\bf {Double logarithmic inequality with a sharp constant in four space dimensions}}
\date{\today}
\begin{document}
\begin{abstract}
We prove a Log Log inequality with a sharp constant in four dimensions for radially symmetric functions. We also
show that the constant in the Log estimate is “almost” sharp.
\end{abstract}
\subjclass{49K20, 35L70}
\keywords{Limiting Sobolev embedding, Moser-Trudinger inequality, best constants.}

\maketitle
\vspace{ 1\baselineskip}
\renewcommand{\theequation}{\thesection.\arabic{equation}}
\section{Introduction and statement of the results}
\setcounter{equation}{0} \setcounter{equation}{0}
The Sobolev embeddings in four dimensions \cite{Adms2},
$$W^{2,p}\hookrightarrow L^\frac8{4-2p}\quad\mbox{for}\quad 1\leq p<2\quad\mbox{and}\quad W^{2,p}\hookrightarrow C^{2-\frac4p}\quad\mbox{for}\quad 2< p<\infty$$
fails in the limiting case $p=2$. In the setting of a bounded domain we have the injection $W^{2,2}\subset L^q$ for any $q<\infty$. The function $\log(1-\log|x|)$ is a conterexample if the domain is a subset of the unit ball. Moreover, $H^2:=W^{2,2}$ functions are in a so-colled Orlicz space \cite{b}, i.e. their exponential powers are integrable functions. Precisely, we have the following Adams' type inequality.
\begin{thm}[{\cite{MS}, Theorem 2.2}]
For any $\alpha\in (0,32\pi^2)$ there exists a constant $C(\alpha)>0$ such that 
\begin{equation}
\label{mos}
\int_{{\mathbb{R}^{4}}}\Big({\rm e}^{\alpha |u(x)|^{2}}-1\Big)dx\leq C(\alpha)\|u\|_{L^2}^2\;\;\;\forall\;\; u\in W^{2,2}(\R^4)\;\;\mbox{with}\;\;
\|\Delta u\|_{L^2}\leq 1,
\end{equation}
and this inequality is false for $\alpha>32\pi^2$.
\end{thm}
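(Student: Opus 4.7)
I would treat the two halves of the statement separately: the inequality for $\alpha<32\pi^2$ and the failure for $\alpha>32\pi^2$.

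For the inequality, the natural plan is to localise to a bounded domain, where the classical Adams inequality applies with the sharp constant $\beta(4,2)=32\pi^2$, and absorb everything else into $\|u\|_{L^2}^2$. First I would pass to the radially symmetric nonincreasing rearrangement of $u$; the integrand $\mathrm{e}^{\alpha u^2}-1$ and $\|u\|_{L^2}$ are preserved, and $\|\Delta u\|_{L^2}$ can be controlled (this is already a technical point, since pointwise rearrangement does not directly respect the bi-Laplacian). With $u$ radial and nonincreasing, the super-level set $\{|u|>1\}$ is a ball $B_R$, and Chebyshev gives $|B_R|\le\|u\|_{L^2}^2$. I would then split
\begin{equation*}
\int_{\R^4}\bigl(\mathrm{e}^{\alpha u^2}-1\bigr)\,dx=\int_{\{|u|\le 1\}}\bigl(\mathrm{e}^{\alpha u^2}-1\bigr)\,dx+\int_{B_R}\bigl(\mathrm{e}^{\alpha u^2}-1\bigr)\,dx,
\end{equation*}
and bound the first piece by the elementary convexity estimate $\mathrm{e}^{\alpha t}-1\le(\mathrm{e}^\alpha-1)\,t$ for $t\in[0,1]$ applied to $t=u^2$, producing $(\mathrm{e}^\alpha-1)\|u\|_{L^2}^2$.

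The remaining integral over $B_R$ is the main obstacle. The classical Adams inequality on a ball requires $u\in W^{2,2}_0(B_R)$, whereas here $u|_{B_R}$ carries non-zero Cauchy data. The idea is to write $u=v+w$ on $B_R$ with $v\in W^{2,2}_0(B_R)$ and $w$ a smooth corrector (for instance a biharmonic extension of the traces of $u$ on $\partial B_R$). Using the radial decay $|u(x)|\lesssim\|u\|_{L^2}/|x|^2$ one checks that $\|w\|_{L^\infty(B_R)}$ is bounded and $\|\Delta v\|_{L^2}\le 1+o(1)$. Combined with the pointwise inequality $\mathrm{e}^{\alpha u^2}\le\mathrm{e}^{(1+\varepsilon)\alpha v^2}\,\mathrm{e}^{C_\varepsilon\alpha w^2}$ and Adams' inequality applied to $v$, this yields $\int_{B_R}\mathrm{e}^{\alpha u^2}\,dx\le C(\alpha)|B_R|\le C(\alpha)\|u\|_{L^2}^2$ whenever $(1+\varepsilon)\alpha<32\pi^2$. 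The necessary loss $\varepsilon>0$ is precisely what forces the hypothesis $\alpha<32\pi^2$ rather than $\alpha\le 32\pi^2$.

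For the failure at $\alpha>32\pi^2$, I would test \eqref{mos} on the Adams--Moser sequence of truncated logarithms
\begin{equation*}
u_n(x)=\frac{1}{\sqrt{8\pi^2\log n}}\,\min\bigl(\log n,\,-\log|x|\bigr)_+,
\end{equation*}
mollified near $|x|=1/n$ and near $|x|=1$ so that $u_n\in W^{2,2}(\R^4)$. Using $\Delta(-\log|x|)=-2/|x|^2$ in four dimensions, one computes $\|\Delta u_n\|_{L^2}\to 1$, $\|u_n\|_{L^2}\to 0$, and $u_n(x)^2\ge\log n/(8\pi^2)$ on $|x|\le 1/n$, hence
\begin{equation*}
\int_{|x|\le 1/n}\bigl(\mathrm{e}^{\alpha u_n^2}-1\bigr)\,dx\gtrsim n^{\alpha/(8\pi^2)-4},
\end{equation*}
which diverges for $\alpha>32\pi^2$ while $\|u_n\|_{L^2}^2\to 0$, contradicting \eqref{mos}.
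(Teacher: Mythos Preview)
The paper does not prove this theorem: it is quoted verbatim from Masmoudi--Sani \cite{MS}, so there is no ``paper's own proof'' to compare against. That said, your sketch has a genuine gap in the first half, and the paper itself flags exactly this obstacle in its Remark following Corollary~\ref{cor2}.

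The problematic step is the passage to the radially symmetric nonincreasing rearrangement $u^\sharp$. You acknowledge that ``pointwise rearrangement does not directly respect the bi-Laplacian'' but then proceed as if it did. In fact no inequality of the form $\|\Delta u^\sharp\|_{L^2}\le\|\Delta u\|_{L^2}$ is known; the P\'olya--Szeg\H{o} mechanism works for $\|\nabla u\|_{L^p}$ but not for second-order energies, and one does not even know that $u^\sharp\in W^{2,2}$ in general. Without this step your super-level set $\{|u|>1\}$ need not be a ball, and the localisation-plus-Adams argument on $B_R$ collapses. The correct substitute, used in \cite{MS} and \cite{RS}, is a Talenti-type comparison: one rearranges $f:=-\Delta u$ (not $u$) and compares $u^\sharp$ pointwise with the radial solution $v$ of $-\Delta v=f^\sharp$ on the ball of equal measure. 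This gives $u^\sharp\le v$, $\|\Delta v\|_{L^2}=\|\Delta u\|_{L^2}$, and reduces the problem to a one-dimensional Adams-type estimate. If you replace your rearrangement step by this comparison principle, the rest of your outline (splitting at level $1$, correcting boundary data, absorbing the loss $\varepsilon$ into the strict inequality $\alpha<32\pi^2$) is in the right spirit.

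Your counterexample for $\alpha>32\pi^2$ is the standard one and the computations are correct; it is essentially the same sequence the paper uses (after smoothing) in Section~3.
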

We stress that $\alpha=32\pi^2$ becomes admissible if we require $\|u\|_{W^{2,2}}\leq 1$ raher than $\|\Delta u\|_{L^2}\leq 1$, where
$$
\|u\|_{W^{2,2}}^2=\|\Delta u\|_{L^2}^2+
\|\nabla u\|_{L^2}^2+\|u\|_{L^2}^2.
$$
\begin{thm}[{\cite{RS}, Theorem 1.4}] There exists a constant $C>0$ such that for any domain $\Omega\subset\R^4$
\begin{equation}
\label{RuSani}
\displaystyle\sup_{u\in H^2_0(\Omega),\,\;\|u\|_{H^{2}}\leq 1}\;\int_{\Omega}\Big({\rm e}^{32\pi^2 |u(x)|^{2}}-1\Big)dx\leq C
\end{equation}
and this inequality is sharp.
\end{thm}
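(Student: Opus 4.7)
The plan is to prove the theorem in three stages: reduce to radial functions on a ball by bi-Laplacian rearrangement, transform the radial problem to a one-dimensional exponential estimate, and close the critical threshold by exploiting the full $H^2$ norm rather than only $\|\Delta u\|_{L^2}$; sharpness is then verified via an explicit Moser-type sequence.

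It is worth stressing at the outset why (1.2) does not follow from Theorem 1.1 by a simple scaling argument. Given $u\in H^2_0(\Omega)$ with $\|u\|_{H^2}\le 1$ and $a:=\|\Delta u\|_{L^2}\le 1$, one might try $v=u/a$ and apply (1.1) with $\alpha=32\pi^2 a^2<32\pi^2$; however the constant $C(32\pi^2 a^2)$ diverges as $a\uparrow 1$, and this divergence generally cannot be compensated by $\|u\|_{L^2}^2\le 1-a^2$. Hence reaching the critical exponent $\alpha=32\pi^2$ genuinely requires extra input.

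For the positive direction I would first apply Talenti-type symmetrization adapted to the bi-Laplacian: associate to $u\in H^2_0(\Omega)$ a radial $u^*\in H^2_0(B_R)$ (with $|B_R|=|\Omega|$) built from the rearrangement of $-\Delta u$. This procedure preserves the distribution function of $u$, so the Orlicz integrand is unchanged, while not increasing $\|\Delta u\|_{L^2}$ or $\|u\|_{L^2}$. It thus suffices to prove the estimate for radial $u\in H^2_0(B_R)$. Substituting $r=Re^{-t/4}$ and $w(t)=\sqrt{32\pi^2}\,u(r)$ converts $\|u\|_{H^2}\le 1$ into a mixed $L^2$-type bound on the derivatives of $w$ on $[0,\infty)$ with $w(0)=0$, while the exponential integral on $B_R$ becomes, up to absolute constants,
$$\int_0^\infty\bigl(e^{w(t)^2-t}-e^{-t}\bigr)\,dt.$$

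The central task is to bound this one-dimensional integral by an absolute constant. I would split $[0,\infty)=E\cup E^c$ with $E=\{t:w(t)^2\le t+A\}$: on $E$ the integrand is at most $e^A$ and Fubini gives a bounded contribution; on $E^c$ a sharp one-dimensional Hardy/Trudinger estimate controls the excess, provided the $L^2$ and $\dot{H}^1$ pieces of $\|u\|_{H^2}$ are brought in alongside $\|\Delta u\|_{L^2}$. This is the delicate step and the main obstacle: the Adams constant $C(\alpha)$ in (1.1) blows up as $\alpha\uparrow 32\pi^2$, and the lower-order contributions in the $H^2$ norm must precisely absorb this divergence. Finally, sharpness of $32\pi^2$ is obtained by an explicit Moser-type sequence of radial functions on $B_1$, built from truncations of the bi-Laplacian fundamental solution $\log|x|$ and normalized so that $\|u_n\|_{H^2}=1$; a direct computation shows $\int_{B_1}(e^{\alpha u_n^2}-1)\,dx\to\infty$ for any $\alpha>32\pi^2$.
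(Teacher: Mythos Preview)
The paper does not prove this statement; it is quoted from \cite{RS} as background and no argument is supplied here. There is therefore no proof in the present paper to compare your proposal against.

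That said, your sketch contains a genuine gap at the symmetrization step. The Talenti-type comparison you invoke does not ``preserve the distribution function of $u$'': starting from $f=-\Delta u$ and solving $-\Delta v=|f|^{\sharp}$ on $\Omega^{\sharp}$ with $v=0$ on $\partial\Omega^{\sharp}$, one obtains only the pointwise inequality $|u|^{\sharp}\le v$, not equality of level sets. This is enough to push the exponential integral upward, $\int_{\Omega}(e^{\alpha u^2}-1)\,dx\le\int_{\Omega^{\sharp}}(e^{\alpha v^2}-1)\,dx$, and it preserves $\|\Delta v\|_{L^2}=\|\Delta u\|_{L^2}$; but since $v\ge |u|^{\sharp}\ge 0$ one has $\|v\|_{L^2}\ge\|u\|_{L^2}$ and likewise $\|\nabla v\|_{L^2}$ may increase. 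Your claim that the procedure does ``not increase $\|u\|_{L^2}$'' is therefore false, and with it the reduction to radial functions under the constraint $\|u\|_{H^2}\le 1$ collapses. This is exactly the obstruction flagged in Remark~1.5 of the paper: for second-order problems no P\'olya--Szeg\H{o}-type control of lower-order norms is available under bi-Laplacian rearrangement, and the actual proof in \cite{RS} requires a more delicate comparison argument (or a Riesz-potential/O'Neil route) that you have not supplied. The one-dimensional analysis and the sharpness sequence you outline are standard and plausible, but they sit downstream of a reduction that, as written, does not go through.
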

In this work, we prove that in the radial case we can control the $L^\infty$ norm with $\dot H^2$ norm and a stronger norm with Logarithmic growth or double logarithmic growth. The inequality is sharp for the double logarithmic growth. Similar results proved in two dimensions in \cite{Ib1,ab} was applied in \cite{Ib2,Col.I} to prove global well-poseness of semilinear wave and Schr\"odinger equations with nonlinearity growing exponentially. \\

For any $\alpha\in(0,1)$, we denote by $C^{\alpha}:=C^{\alpha}(\R^4)$ the space of $\alpha$-H\"older continuous functions endowed with the norm
$$\|u\|_{C^{\alpha}}=\|u\|_{C^{\alpha}(\R^4)}:=\|u\|_{L^{\infty}(\R^4)}+\sup_{x\neq y}\frac{|u(x)-u(y)|}{|x-y|^{\alpha}}.$$
Moreover, $\dot C^{\alpha}:=\dot C^{\alpha}(\R^4)$ denotes the homogenous space of $\alpha$-H\"older continuous functions endowed with the semi norm
$$\|u\|_{\dot C^{\alpha}}=\|u\|_{\dot C^{\alpha}(\R^4)}:=\sup_{x\neq y}\frac{|u(x)-u(y)|}{|x-y|^{\alpha}}.$$
We also define the ratio $N_{\alpha}(u):=\frac{\|u\|_{\dot C^{\alpha}}}{\|\Delta u\|_{L^2}}$. For any positive real number $r$, $B_r$ is the ball of $\R^4$ centered at the origin with radius $r$ and $B:=B_1$. The space $H_0^2(\Omega)$ stands for the completion in the Sobolev space $H^2$ of smooth and compactly supported functions. $H_{0,rad}^2(\Omega)$ $($respectively $H_{rad}^2(\Omega))$ is the space of radially symmetric functions of $H_{0}^2(\Omega)$ $($respectively $H^2(\Omega))$.\\\\
Our first result reads
\begin{thm}{\bf (Double Log estimate)}\label{t1}
Let $\alpha\in (0,1)$. A positive constant $C_\alpha$ exists such that for any function $u\in(H^2_{0,rad}\cap \dot C^{\alpha})(B)$, we have
\begin{equation}\label{logg}
\|u\|_{L^{\infty}}^2\leq\frac1{8\pi^2\alpha}\|\Delta u\|_{L^2}^2\log\Big(e^3+C_\alpha N_{\alpha}(u)\sqrt{\log(2e+N_{\alpha}(u))}\Big).
\end{equation}
Moreover, the constant $\frac1{8\pi^2\alpha}$ in the above inequality is sharp.
\end{thm}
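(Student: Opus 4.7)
Set $L=\|\Delta u\|_{L^2}$, $A=\|u\|_{L^\infty}$, $N=N_\alpha(u)$, and $t=A/L$. The core idea is to compare a lower bound for $\int_{B(x_0,\rho)}(\exp(\beta u^2/L^2)-1)\,dx$ coming from the $\dot C^\alpha$ regularity of $u$ near its maximum, with the upper bound provided by Adams' inequality \eqref{mos}. By density I may assume $u$ continuous and radial; since $u$ vanishes on $\partial B$, a point $x_0\in B$ with $|u(x_0)|=A$ exists, and H\"older continuity gives
\begin{equation*}
|u(x)|^2\geq (A-\|u\|_{\dot C^\alpha}\rho^\alpha)^2\geq A^2-2A\,\|u\|_{\dot C^\alpha}\,\rho^\alpha\quad\text{for all }x\in B(x_0,\rho).
\end{equation*}

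Applying \eqref{mos} to $u/L$ with $\beta<32\pi^2$, and using the Poincar\'e inequality on $B$ (which gives $\|u\|_{L^2}\leq CL$), reduces the Adams bound to $\int_{\R^4}(\exp(\beta u^2/L^2)-1)\,dx\leq C_\beta$. In the nontrivial regime $A\gg L$, restricting to $B(x_0,\rho)$ with $\rho$ so small that the ball sits inside $B$ yields
\begin{equation*}
c_4\,\rho^4\,\exp\!\Big(\beta A^2/L^2-2\beta A\|u\|_{\dot C^\alpha}\rho^\alpha/L^2\Big)\leq C_\beta.
\end{equation*}
Balancing the correction by the choice $\rho^\alpha=L^2/(A\|u\|_{\dot C^\alpha})$ and taking logarithms produces $\beta t^2\leq (4/\alpha)\log(tN)+C_\beta'$. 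Letting $\beta\nearrow 32\pi^2$ turns the prefactor into the sharp $1/(8\pi^2\alpha)$ and yields the implicit estimate
\begin{equation*}
t^2\leq \frac{1}{8\pi^2\alpha}\log(tN)+C.
\end{equation*}

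To upgrade this to the double-log form I would iterate. Because $\log t=o(t^2)$ the implicit bound first gives $t\leq C_0(\sqrt{\log N}+1)$; substituting this back,
\begin{equation*}
t^2\leq \frac{1}{8\pi^2\alpha}\log\!\big(C_0 N\sqrt{\log N+1}\big)+C'\leq \frac{1}{8\pi^2\alpha}\log\!\Big(e^3+C_\alpha N\sqrt{\log(2e+N)}\Big),
\end{equation*}
after absorbing the additive $C'$ into the argument of the logarithm by enlarging $C_\alpha$. This is \eqref{logg}. For sharpness, I would test \eqref{logg} against the four-dimensional Moser--Adams sequence: let $u_n$ be radial, equal to $A_n\log(1/r)$ on $1/n\leq r\leq 1$, constant on $r<1/n$, smoothly cut off near $r=1$, with $A_n=(8\pi^2\log n)^{-1/2}$. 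Using $\Delta(\log 1/r)=-2/r^2$ in $\R^4$ and $|\mathbb{S}^3|=2\pi^2$, one checks $\|\Delta u_n\|_{L^2}^2\to 1$, $\|u_n\|_{L^\infty}^2\sim \log n/(8\pi^2)$, and $\|u_n\|_{\dot C^\alpha}\sim A_n\,n^\alpha$, whence $\log N_\alpha(u_n)\sim \alpha\log n$; the quotient between the two sides of \eqref{logg} then tends to $1/(8\pi^2\alpha)$, ruling out any smaller constant.

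The main obstacle is extracting the precise coefficient $1/(8\pi^2\alpha)$: one must push $\beta$ up to the critical value $32\pi^2$ in \eqref{mos}, while simultaneously controlling the H\"older oscillation term $2\beta A\|u\|_{\dot C^\alpha}\rho^\alpha/L^2$; it is this balance that dictates the specific choice $\rho^\alpha=L^2/(A\|u\|_{\dot C^\alpha})$ and produces the factor $4/\alpha$ which, after dividing by $32\pi^2$, gives the sharp constant. The remaining steps — the iteration converting the single logarithm into a double logarithm, and the extremizing sequence — are then routine.
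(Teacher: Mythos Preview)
The crucial step ``letting $\beta\nearrow 32\pi^2$ turns the prefactor into the sharp $1/(8\pi^2\alpha)$ and yields the implicit estimate $t^2\leq\frac{1}{8\pi^2\alpha}\log(tN)+C$'' is where the argument fails. The constant $C(\beta)$ in Adams' inequality \eqref{mos} blows up as $\beta\to 32\pi^2$: testing \eqref{mos} against the paper's normalized Moser--Adams sequence $v_\varepsilon$, the left side stays bounded away from zero (the inner ball $\{|x|\leq\varepsilon^{1/4}\}$ already contributes a fixed positive amount) while $\|v_\varepsilon\|_{L^2}^2\sim C/\log(1/\varepsilon)\to 0$, so no finite constant works at $\beta=32\pi^2$. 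Hence for every fixed $\beta<32\pi^2$ your argument gives only
\[
t^2\;\leq\;\frac{4}{\alpha\beta}\,\log(tN)+D_\beta,\qquad D_\beta\to\infty\ \text{ as }\ \beta\to 32\pi^2,
\]
and no limiting procedure produces $t^2\leq\frac{1}{8\pi^2\alpha}\log(tN)+C$ with a finite $C$. After the iteration step this yields precisely the Log estimate \eqref{log} with an arbitrary $\lambda>\frac{1}{8\pi^2\alpha}$ --- i.e.\ Theorem~\ref{t2}, not Theorem~\ref{t1}. Even optimizing $\beta$ as a function of $N$ leaves an additive error that grows with $N$ and cannot be absorbed into the precise argument $e^3+C_\alpha N\sqrt{\log(2e+N)}$; the sharp constant together with the exact $\sqrt{\log}$ correction is exactly what separates the two theorems, and the Adams bound is too coarse to see it.

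The paper takes a different, direct route. After normalizing $\|u\|_{L^\infty}=u(0)=1$, the H\"older bound places $u$ in the obstacle set $K_D=\{v\in H^2_{0,rad}(B):v(r)\geq 1-Dr^\alpha\}$ with $D=\|u\|_{\dot C^\alpha}$, and one solves the biharmonic obstacle problem $\min_{K_D}\|\Delta v\|_{L^2}^2$. The minimizer $u^*$ coincides with the obstacle on $[0,r_0]$ and is biharmonic on $(r_0,1]$, so it can be written in closed form; computing $\|\Delta u^*\|_{L^2}^2$ and $D$ explicitly as functions of $x=r_0^2$ and analyzing the resulting one-variable expression gives the lower bound $8\pi^2\alpha$ for the functional. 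It is this explicit minimization, not an exponential integrability bound, that delivers the sharp constant.

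A minor point on your sharpness sequence: taking $u_n$ constant on $r<1/n$ and equal to $A_n\log(1/r)$ for $r\geq 1/n$ creates a jump in the radial derivative at $r=1/n$, so $u_n\notin H^2$. One needs a quadratic bridge on the inner ball (as in the paper's $v_\varepsilon$) to make the function $C^1$; the asymptotics you quote are otherwise correct.
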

The second result of this paper is the following
\begin{thm}{\bf (Log estimate)}\label{t2}
Let $\alpha\in (0,1)$. For any $\lambda>\frac{1}{8\pi^2\alpha}$ there exists $C_{\lambda}>0$ such that for any function $u\in(H^2_{0,rad}\cap \dot C^{\alpha})(B)$, we have
\begin{equation}\label{log}
\|u\|_{L^{\infty}}^2\leq\lambda\|\Delta u\|_{L^2}^2\log\Big(C_{\lambda}+N_{\alpha}(u)\Big).
\end{equation}
Moreover, the above inequality is false for $\lambda=\frac{1}{8\pi^2\alpha}$.
\end{thm}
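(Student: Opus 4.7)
The plan is to derive the Log estimate from the Double Log estimate (Theorem~\ref{t1}) and to establish sharpness of $\lambda = \mu := \frac{1}{8\pi^2\alpha}$ via an Adams--Moser extremal sequence. Throughout set $N := N_\alpha(u)$.

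For the forward direction, by Theorem~\ref{t1} it suffices to find $C_\lambda>0$ such that
$$
\mu \log\!\big(e^3 + C_\alpha N \sqrt{\log(2e + N)}\big) \;\leq\; \lambda \log(C_\lambda + N) \qquad \forall\, N \geq 0.
$$
As $N \to \infty$ the left side grows like $\mu(\log N + \tfrac12 \log \log N)$ while the right side grows like $\lambda \log N$; since $\lambda > \mu$ the inequality holds for $N \geq N_0(\alpha,\lambda)$ independently of $C_\lambda$. On the compact range $N \leq N_0$ the left side is bounded by some constant $K(\alpha,\lambda)$, so any $C_\lambda \geq e^{K/\lambda}$ closes the argument. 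This step is essentially a routine asymptotic comparison.

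The heart of the theorem is the sharpness of the constant $\mu$. Following the classical Adams construction I would build a radial family $u_n$ on $B$ that, modulo a mild smoothing near $r = 1/n$ and near $\partial B$, is given by
$$
u_n(r) = M_n \cdot \begin{cases} 1, & 0 \leq r \leq 1/n, \\ \log(1/r)/\log n, & 1/n \leq r \leq 1, \end{cases}
$$
with $M_n$ chosen to normalize $\|\Delta u_n\|_{L^2} = 1$. A direct radial computation gives $\Delta u_n = -2 M_n/(r^2 \log n)$ on the annulus, hence $\|\Delta u_n\|_{L^2}^2 = 8\pi^2 M_n^2/\log n$; this forces $M_n^2 = \log n/(8\pi^2)$ and therefore $\|u_n\|_{L^\infty}^2 = \log n/(8\pi^2)$. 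The H\"older semi-norm is then estimated by testing on pairs $x = (1/n)e$, $y = (t/n)e$ with $|e|=1$, yielding
$$
\frac{|u_n(x)-u_n(y)|}{|x-y|^\alpha} = \frac{M_n n^\alpha}{\log n} \cdot \frac{\log t}{(t-1)^\alpha};
$$
maximizing in $t>1$ (the maximum is attained at some $t^\ast=t^\ast(\alpha)$) and checking that the remaining configurations give nothing larger yields $\|u_n\|_{\dot C^\alpha} \asymp M_n n^\alpha/\log n$ and $N_\alpha(u_n) \asymp n^\alpha/\sqrt{\log n}$.

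Substituting into the putative inequality with $\lambda = \mu$ and any fixed $C > 0$,
$$
\mu \log(C + N_\alpha(u_n)) = \mu\big(\alpha \log n - \tfrac12 \log \log n\big) + O(1) = \frac{\log n}{8\pi^2} - \frac{\log\log n}{16 \pi^2 \alpha} + O(1),
$$
whereas $\|u_n\|_{L^\infty}^2 = \log n/(8\pi^2)$; the difference behaves like $\log \log n/(16\pi^2 \alpha)$ and diverges, so the inequality fails for $n$ large. The main technical obstacle is to regularize $u_n$ so that it actually lies in $H^2_{0,rad}(B) \cap \dot C^\alpha(B)$: one must smooth the jump of the radial derivative at $r = 1/n$ (to remove the Dirac singularity in $\Delta u_n$ there) and arrange $u_n = \partial_\nu u_n = 0$ on $\partial B$, while verifying that these perturbations affect $\|u_n\|_{L^\infty}$, $\|\Delta u_n\|_{L^2}$ and $\|u_n\|_{\dot C^\alpha}$ only to subleading order that can be absorbed into the $O(1)$ above.
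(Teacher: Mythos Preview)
Your forward direction is correct and genuinely more economical than the paper's. The paper does not invoke Theorem~\ref{t1}; instead it reruns the obstacle-problem machinery from that proof: normalize $\|u\|_{L^\infty}=1$, pass to the constraint set $K_D$, solve for the explicit biharmonic minimizer $u^*$, and show that the resulting one-variable function $H(x)=\|\Delta u^*\|_{L^2}^2\log\big(C+N_\alpha(u^*)\big)$ tends to $8\pi^2\alpha$ as $x\to 0$, then adjusts $C_\lambda$ on the remaining compact range. Your derivation from the double-log estimate replaces all of this by a two-line asymptotic comparison, which is a legitimate shortcut once Theorem~\ref{t1} is in hand.

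For sharpness the two approaches are close in spirit---both rely on an Adams-type logarithmic profile---but the execution differs. The paper avoids the regularization issue you flag by taking $u_n$ to be the exact obstacle minimizer $u^*$ at $x_n=1/n$: this is $1-D_n r^\alpha$ on $[0,a_n]$ glued to an explicit biharmonic tail on $[a_n,1]$, and lies automatically in $H^2_{0,rad}\cap C^2$ with all norms available in closed form. Your profile, by contrast, requires smoothing at $r=1/n$ and at $\partial B$, and the delicate point you must control after smoothing is the sharp upper bound $\|u_n\|_{\dot C^\alpha}\lesssim M_n n^\alpha/\log n$. Note that the cruder interpolation estimate $\|u\|_{\dot C^\alpha}\leq\|u\|_{L^\infty}^{1-\alpha}\|u\|_{Lip}^\alpha$ only gives $M_n n^\alpha/(\log n)^\alpha$, which yields $\mu\log N_\alpha(u_n)\leq \frac{\log n}{8\pi^2}+\frac{1-2\alpha}{16\pi^2\alpha}\log\log n+O(1)$ and fails to contradict the inequality when $\alpha<\tfrac12$. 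Your direct computation of the H\"older quotient along radial pairs does give the sharp rate for the unregularized function, but you have not verified that the smoothing preserves it; the paper's use of the obstacle minimizer sidesteps this entirely.
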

We derive the following global estimate.
\begin{cor}{\bf (Global Log estimate)}\label{cor2}
Let $\alpha\in (0,1)$. For any $\lambda>\frac{1}{8\pi^2\alpha}$ and any $\mu\in (0,1]$, there exists $C_{\lambda}>0$ such that for any function $u\in(H^2_{rad}\cap C^{\alpha})(\R^4)$, we have
\begin{equation}\label{logR}
\|u\|_{L^{\infty}}^2\leq\lambda\| u\|_{\mu}^2\log\left(C_{\lambda}+\frac{8^{\alpha}\mu^{-\alpha}\|u\|_{C^{\alpha}}}{\|u\|_{\mu}}\right).
\end{equation}
Where $\|u\|_{\mu}^2:=(1+3\mu)\|\Delta u\|_{L^2}^2+3\mu\|u\|_{H^1}^2.$
\end{cor}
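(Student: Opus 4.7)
The plan is to reduce Corollary \ref{cor2} to Theorem \ref{t2} by a cutoff-plus-rescaling of $u$, followed by a case split according to the location of the maximum of $|u|$. Fix once and for all a smooth radial cutoff $\psi:\R^4\to[0,1]$ with $\psi\equiv 1$ on $B_{1/8}$, $\mathrm{supp}\,\psi\subset B$, $\|\nabla\psi\|_\infty^2\leq 9/32$, and $\|\Delta\psi\|_\infty^2\leq 9/8$. Given $u\in(H^2_{\mathrm{rad}}\cap C^\alpha)(\R^4)$ and $\mu\in(0,1]$, define
$$v(y):=\psi(y)\,u(y/\mu),\qquad y\in B.$$
Since $\psi$ and $u$ are radial and $\psi$ vanishes near $\partial B$, we have $v\in H^2_{0,\mathrm{rad}}(B)$, which is the function to which we will apply Theorem \ref{t2}.

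The main estimate is the tight bound $\|\Delta v\|_{L^2(B)}^2\leq\|u\|_{\mu}^2$ with constant exactly $1$. Expanding $\Delta v=\psi\,\Delta[u(\cdot/\mu)]+2\nabla\psi\cdot\nabla[u(\cdot/\mu)]+u(\cdot/\mu)\,\Delta\psi$, using the scaling identities $\|\Delta[u(\cdot/\mu)]\|_{L^2}^2=\|\Delta u\|_{L^2}^2$, $\|\nabla[u(\cdot/\mu)]\|_{L^2}^2=\mu^2\|\nabla u\|_{L^2}^2$, $\|u(\cdot/\mu)\|_{L^2}^2=\mu^4\|u\|_{L^2}^2$, and Young's inequality
$$(a+b+c)^2\;\leq\;(1+3\mu)\,a^2+2\bigl(1+\tfrac{1}{3\mu}\bigr)(b^2+c^2),$$
followed by $\mu^4,\mu^2\leq\mu$ (valid for $\mu\in(0,1]$) and the numerical bounds on $\|\nabla\psi\|_\infty$, $\|\Delta\psi\|_\infty$ above, the three resulting coefficients match exactly those of $\|u\|_\mu^2=(1+3\mu)\|\Delta u\|^2+3\mu\|\nabla u\|^2+3\mu\|u\|^2$. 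In parallel, the Leibniz-type inequality for the $\dot C^\alpha$-seminorm together with the Hölder scaling $\|u(\cdot/\mu)\|_{\dot C^\alpha}=\mu^{-\alpha}\|u\|_{\dot C^\alpha}$ and $\|u\|_\infty\leq\|u\|_{C^\alpha}$ yields
$$\|v\|_{\dot C^\alpha(B)}\;\leq\;8^\alpha\mu^{-\alpha}\|u\|_{C^\alpha},$$
the prefactor $8^\alpha$ being enforced by the specific choice of $\psi$.

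To conclude, let $r_0\geq 0$ be such that $|u|$ at radius $r_0$ equals $\|u\|_\infty$ (which exists by radiality and continuity). If $r_0\leq\tfrac{1}{8\mu}$ then $\mu r_0\leq 1/8$, so $\psi=1$ at the corresponding rescaled point and $\|v\|_\infty=\|u\|_\infty$. Theorem \ref{t2} applied to $v$ then gives
$$\|u\|_\infty^2=\|v\|_\infty^2\;\leq\;\lambda\|\Delta v\|^2\log\!\Bigl(C_\lambda+\tfrac{\|v\|_{\dot C^\alpha}}{\|\Delta v\|}\Bigr).$$
Since $t\mapsto t^2\log(C_\lambda+B/t)$ is non-decreasing in $t$ once $C_\lambda$ is sufficiently large (which we may arrange by enlarging $C_\lambda$), we substitute the upper bounds $\|\Delta v\|\leq\|u\|_\mu$ and then $\|v\|_{\dot C^\alpha}\leq 8^\alpha\mu^{-\alpha}\|u\|_{C^\alpha}$ to obtain (\ref{logR}). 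If instead $r_0>\tfrac{1}{8\mu}$, the standard radial Sobolev (Strauss) decay $|u(x)|\leq C|x|^{-3/2}\|u\|_{H^1}$ for radial $u\in H^1(\R^4)$ gives $\|u\|_\infty\leq C(8\mu)^{3/2}\|u\|_{H^1}$; combined with $\|u\|_\mu^2\geq 3\mu\|u\|_{H^1}^2$ and $\mu\leq 1$ this yields $\|u\|_\infty\leq C'\|u\|_\mu$, so (\ref{logR}) is immediate once $C_\lambda\geq e^{(C')^2/\lambda}$.

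The main obstacle is the tight bookkeeping in the $\|\Delta v\|_{L^2}^2\leq\|u\|_\mu^2$ estimate: achieving constant exactly $1$ (not merely $O(1)$) is what preserves the sharp prefactor $\lambda$ of Theorem \ref{t2} in (\ref{logR}). The precise form of $\|u\|_\mu^2$ with weights $(1+3\mu)$ and $3\mu$ is designed to absorb exactly the Young-inequality loss at $\varepsilon=3\mu$, and the explicit numerical constraints on $\|\nabla\psi\|_\infty$, $\|\Delta\psi\|_\infty$ are what make the scheme close.
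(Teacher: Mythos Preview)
Your overall strategy coincides with the paper's: localize $u$ by a smooth radial cutoff, apply the log estimate on a ball, and show that the $L^2$-norm of the Laplacian of the localized function is controlled by $\|u\|_\mu$. Your case split according to the location of the maximum, with the far case handled via the radial Strauss inequality, is in fact more careful than the paper's bare assertion that one may assume $\|u\|_{L^\infty}=|u(0)|$.

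However, there is a genuine gap: no cutoff $\psi$ with your stated properties exists. Since $\psi$ is radial with $\psi(1/8)=1$ and $\psi(r)=0$ for $r\geq 1$, the mean value theorem applied to the radial profile forces
\[
\|\nabla\psi\|_\infty\;\geq\;\frac{1}{1-\tfrac18}\;=\;\frac{8}{7},\qquad\text{hence}\qquad \|\nabla\psi\|_\infty^2\;\geq\;\frac{64}{49}\;>\;1,
\]
far above the required bound $9/32$. This destroys the ``tight bookkeeping'' you rely on to obtain $\|\Delta v\|_{L^2}^2\leq\|u\|_\mu^2$ with constant exactly $1$, which, as you correctly emphasize, is the crux of preserving the sharp prefactor $\lambda$.

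The paper avoids this obstruction by \emph{not} forcing the cutoff into $B_1$. It takes $\phi\in C_0^\infty(B_4)$ with $|\nabla\phi|\leq 1$ and $|\Delta\phi|\leq 1$ (feasible, since the transition region has width of order $3$), sets $\phi_\mu:=\phi(\tfrac{\mu}{2}\,\cdot)$, and works directly with $u_\mu:=\phi_\mu u$, supported in $B_{8/\mu}$. One then applies the scaled estimate of Corollary~\ref{cr1} with $R=8/\mu$, and the factor $R^\alpha=8^\alpha\mu^{-\alpha}$ appears inside the logarithm automatically, rather than through a $\dot C^\alpha$ bound on the cutoff. The derivative bounds $|\nabla\phi_\mu|\leq\mu/2$ and $|\Delta\phi_\mu|\leq\mu^2/4$ then make the estimate $\|\Delta u_\mu\|_{L^2}^2\leq\|u\|_\mu^2$ go through. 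Your rescaling $u(\cdot/\mu)$ is, up to a change of variables, equivalent to this --- but only if the cutoff is allowed to live on a ball large enough for its gradient to be made small; insisting on $\mathrm{supp}\,\psi\subset B_1$ is precisely what renders your numerical constraints unachievable.
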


\begin{rem}
When we deal with higher order derivatives, we cannot reduce the problem to the radial case as in dimension two for example. The reason is that, for a given function $u\in W^{2,2}$, we do not know wether or not $u^{\sharp}$ (the Schwarz symmetrization of $u$) still belongs to $W^{2,2}$. Even if this is the case, no inequality of the form $\|\Delta u^{\sharp}\|_{L^2}\leq \|\Delta u\|_{L^2}$ is known to hold. To overcome this difficulty, one can try to apply a suitable comparison principle as in \cite{RS}. Since in our case we need to control H\"older norms also, this method fails to reduce our problem to the radial case. This is why we restrict ourselves to the radial setting.
\end{rem}
Finally, we mention that $C$ will be used to denote a constant which may vary from line to line.
We also use $A\lesssim B$ to denote an estimate of the form $A\leq C B$
for some absolute constant $C$ and $A\approx B$ if $A\lesssim B$ and $B\lesssim A$.
\section{A Littlewood-Paley proof}
We prove that inequality \eqref{log} can be obtained with an unknown absolute constant instead of any $\lambda>\frac1{8\pi^2\alpha}$. To do so, we give a brief review of the Littlewood-Paley theory. We refer to \cite{ch} for more details. Denote by $\mathcal C_0$ the annulus ring defined by
$$\mathcal C_0:=\{x\in \R^4\quad\mbox{such that}\quad \frac34<|x|<\frac83\},$$
and choose two nonnegative radial functions $\chi\in C_0^\infty(B_\frac43)$ and $\varphi\in C_0^\infty(\mathcal C_0)$ such that
$$\chi+\sum_{j\in\N}\varphi(2^{-j}.)=1\quad\mbox{on}\quad\R^4\quad\mbox{and}\quad \sum_{j\in\Z}\varphi(2^{-j}.)=1 \quad\mbox{on}\quad\R^4-\{0\}.$$
Define the frequency projectors by
$$\mathcal F(\dot\Delta_ju):=\mathcal \varphi(2^{-j}.)\mathcal Fu\quad\mbox{for}\quad j\in\Z,$$
$$\Delta_{j}u=0\quad\mbox{if}\quad j\leq -2,\quad\mathcal F(\Delta_{-1}u)=\chi \mathcal Fu \quad\mbox{and}\quad\Delta_ju:=\dot\Delta_j u\quad\mbox{for}\quad j\geq 0.$$
Recall that
$$\|\Delta u\|_{L^2}^2\approx\Big(\sum_{j\in\Z}2^{4j}\|\dot\Delta_ju\|_{L^2}^2\Big)\quad\mbox{and}\quad \|u\|_{\dot C^{\alpha}}\approx\sup_{j\in\Z}\Big(2^{j\alpha}\|\dot\Delta_ju\|_{L^\infty}\Big).$$
We have the following result in the whole space
\begin{prop}\label{pr}
Let $\alpha\in(0,1)$. There exists a positive constant $C:=C_\alpha$ such that for any function $u\in (\mathcal C^\alpha\cap H^2)(\R^4)$, one has
\begin{equation}
\|u\|_{L^\infty}^2\leq C\|u\|_{L^2}^2+C\|\Delta u\|_{L^2}^2\log\Big(e+N_\alpha(u)\Big).
\end{equation}
\end{prop}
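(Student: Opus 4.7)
The plan is to start from the inhomogeneous Littlewood--Paley decomposition $u=\Delta_{-1}u+\sum_{j\ge 0}\Delta_j u$, to split this sum at an integer threshold $N\ge 0$ to be optimized later, and to estimate the low- and high-frequency pieces by different means. Thus I would write
$$\|u\|_{L^\infty}\le \|\Delta_{-1}u\|_{L^\infty}+\underbrace{\sum_{0\le j\le N}\|\Delta_j u\|_{L^\infty}}_{S_{\rm low}}+\underbrace{\sum_{j>N}\|\Delta_j u\|_{L^\infty}}_{S_{\rm high}}$$
and treat $S_{\rm low}$, $S_{\rm high}$ separately.

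For the low frequencies, Bernstein's inequality in $\R^4$ gives $\|\Delta_{-1}u\|_{L^\infty}\lesssim \|\Delta_{-1}u\|_{L^2}\lesssim \|u\|_{L^2}$, and for $j\ge 0$ one has $\|\Delta_j u\|_{L^\infty}\lesssim 2^{2j}\|\Delta_j u\|_{L^2}\approx \|\Delta\Delta_j u\|_{L^2}$, the last equivalence coming from the spectral localization of $\Delta_j u$ on the annulus $|\xi|\sim 2^j$. Cauchy--Schwarz then yields
$$S_{\rm low}\lesssim \sum_{0\le j\le N}\|\Delta\Delta_j u\|_{L^2}\le \sqrt{N+1}\,\Bigl(\sum_{j\in\Z}\|\Delta\Delta_j u\|_{L^2}^2\Bigr)^{1/2}\lesssim \sqrt{N+1}\,\|\Delta u\|_{L^2}.$$

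For the high frequencies I would invoke the Littlewood--Paley characterization of $\dot C^\alpha$ recalled in the excerpt, namely $\|\Delta_j u\|_{L^\infty}\le 2^{-j\alpha}\|u\|_{\dot C^\alpha}$, and sum the geometric series to obtain
$$S_{\rm high}\le \|u\|_{\dot C^\alpha}\sum_{j>N}2^{-j\alpha}\le \frac{2^{-N\alpha}}{1-2^{-\alpha}}\,\|u\|_{\dot C^\alpha}=:C_\alpha\,2^{-N\alpha}\|u\|_{\dot C^\alpha}.$$
Combining the three pieces and squaring produces the key inequality
$$\|u\|_{L^\infty}^2\lesssim \|u\|_{L^2}^2+(N+1)\|\Delta u\|_{L^2}^2+C_\alpha^2\,2^{-2N\alpha}\|u\|_{\dot C^\alpha}^2,$$
valid for every integer $N\ge 0$.

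It remains to optimize in $N$, which is the only non-mechanical step. The idea is to equalize the last two terms, i.e.\ to choose $N$ so that $2^{2N\alpha}\approx N_\alpha(u)^2$; concretely I would set $N=\max\bigl\{0,\big\lceil \tfrac{1}{\alpha}\log_2(1+N_\alpha(u))\big\rceil\bigr\}$. With this choice, $N+1\lesssim_\alpha \log(e+N_\alpha(u))$ and $C_\alpha^2\,2^{-2N\alpha}\|u\|_{\dot C^\alpha}^2\lesssim_\alpha\|\Delta u\|_{L^2}^2$, so the final two terms are both controlled by $C_\alpha\|\Delta u\|_{L^2}^2\log(e+N_\alpha(u))$, which is the conclusion. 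The main obstacle is exactly this balancing argument: one must check that the integer rounding and the degenerate case $N_\alpha(u)\le 1$ (where $N=0$ is forced) do not ruin the logarithmic form, and that the constants absorbed in the $\lesssim_\alpha$ are indeed finite for every fixed $\alpha\in(0,1)$. No sharp constant is sought here, so the factor $\tfrac{1}{8\pi^2\alpha}$ will be recovered only in the refined proofs of Theorems \ref{t1}--\ref{t2}.
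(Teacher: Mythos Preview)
Your proposal is correct and follows essentially the same route as the paper: an inhomogeneous Littlewood--Paley decomposition, Bernstein for the block $\Delta_{-1}u$ and for the low-frequency sum (followed by Cauchy--Schwarz to produce the factor $\sqrt{N+1}$), the $\dot C^\alpha$ characterization for the high-frequency tail, and then an optimization of the cutoff $N\sim\frac{1}{\alpha}\log_2 N_\alpha(u)$. Your treatment of the balancing step is in fact slightly more careful than the paper's, which simply sets $m=\max\bigl(1,1+E(2\log_2 N_\alpha(u)^2)\bigr)$ without discussing the degenerate regime.
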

\begin{proof}
We have
$$u=\Delta_{-1}u+\sum_{j\in\N}\Delta_ju=\Delta_{-1}u+\sum_{j=0}^{m-1}\Delta_ju+\sum_{j=m}^\infty\Delta_ju$$
where $m$ is an integer to fix later. Using Bernstein inequality, we get
\begin{eqnarray*}
\|u\|_{L^\infty}
&\leq&C\|\Delta_{-1}u\|_{L^2}+C\sum_{j=0}^{m-1}2^{2j}\|\Delta_ju\|_{L^2}+\sum_{j=m}^\infty2^{-j\alpha}(2^{j\alpha}\|\Delta_ju\|_{L^\infty})\\
&\leq&C\|u\|_{L^2}+C\sqrt m\Big(\sum_{j=0}^{m-1}2^{4j}\|\Delta_ju\|_{L^2}^2\Big)^\frac12+C\Big(\sum_{j=m}^\infty 2^{-j\alpha}\Big)\|u\|_{\dot C^\alpha}\\
&\leq&C\Big(\|u\|_{L^2}+\sqrt m\|\Delta u\|_{L^2}+\frac{2^{-m\alpha}}{1-2^{-\alpha}}\|u\|_{\dot C^\alpha}\Big).
\end{eqnarray*}
So
$$\|u\|_{L^\infty}^2\leq C\Big(\|u\|_{L^2}^2+ m\|\Delta u\|_{L^2}^2+\frac{2^{-2m\alpha}}{(1-2^{-\alpha})^2}\|u\|_{\dot C^\alpha}^2\Big).$$
Taking for $E(x)$ the integer part of any real $x$,
$$m:=\max\Big(1,1+E(2\log_2(N_\alpha(u)^2)\Big),$$
the proof is achieved.
\end{proof}
Clearly, if $u$ is supported in the unit ball, then by Poincar\'e inequality and Proposition \ref{pr}, we get
$$\|u\|_{L^\infty}^2\leq C_\alpha\|\Delta u\|_{L^2}^2\log\Big(C_0+N_\alpha(u)\Big)$$
for some constant $C_0$ big enough.
\section{Proof of Theorem \ref{t1}}
To prove \eqref{logg} and the fact that the constant is sharp, it is sufficient to show that
$$\inf_{u\in(H^2_{0,rad}\cap \dot C^{\alpha})(B)}\frac{\|\Delta u\|_{L^2}^2\log\Big[e^3+C_0N_{\alpha}(u)\sqrt{\log(2e+N_\alpha(u))}\Big]}{\|u\|_{L^{\infty}}^2}=8\pi^2\alpha.$$
Let prove, first, the optimality of the constant $8\pi^2\alpha$ in the previous equality. Define for $\varepsilon>0$, the functions
$$v_{\varepsilon}(x):=\left\{\begin{array}{rl}
\sqrt{\frac{1}{32\pi^2}\log(\frac{1}{\varepsilon})}-\frac{|x|^2}{\sqrt{8\pi^2\varepsilon\log(\frac{1}{\varepsilon})}}+\frac{1}{\sqrt{8\pi^2\log(\frac{1}{\varepsilon})}}&\quad\mbox{if}\quad |x|\leq\varepsilon^{\frac{1}{4}},\\
\frac{1}{\sqrt{{2\pi^2}\log(\frac{1}{\varepsilon})}}\log(\frac{1}{|x|})&\quad\mbox{if}\quad \varepsilon^{\frac{1}{4}}\leq |x|\leq 1,\\
\frac{(-|x|+1)(|x|-2)^2}{\sqrt{{2\pi^2}\log(\frac{1}{\varepsilon})}}&\quad\mbox{if}\quad 1<|x|<2\\
0&\quad\mbox{if}\quad|x|\geq 2.
\end{array}\right.$$
Clearly  $u_{\varepsilon}(x):=v_{\varepsilon}(2x)\in H_0^2(B)$. Moreover, for small $\varepsilon>0$, we have
$$\|u_{\varepsilon}\|_{L^{\infty}(B)}=\|v_{\varepsilon}\|_{L^{\infty}(B_2)}=\sqrt{\frac{1}{32\pi^2}\log(\frac{1}{\varepsilon})}+\frac{1}{\sqrt{8\pi^2\log(\frac{1}{\varepsilon})}}$$
 and $$\|u_{\varepsilon}\|_{Lip(B)}=2\|v_{\varepsilon}\|_{Lip(B_2)}\leq \frac{2}{\pi\sqrt{2\varepsilon^{\frac{1}{2}}\log(\frac{1}{\varepsilon})}}.$$
Since $\|u_{\varepsilon}\|_{\dot C_{\alpha}}\leq \|u_{\varepsilon}\|_{L^{\infty}}^{1-\alpha}\|u_{\varepsilon}\|_{Lip}^{\alpha}$, we get
$$\|u_{\varepsilon}\|_{\dot C^{\alpha}}\leq C_{\alpha}\frac{\log(\frac{1}{\varepsilon})^{\frac{1}{2}-\alpha}}{\varepsilon^{\frac{\alpha}{4}}}.$$
Using the fact that $\|\Delta u_{\varepsilon}\|_{L^2(B)}^2=\|\Delta v_{\varepsilon}\|_{L^2(B_2)}^2=1+O(\frac{1}{\log(\frac{1}{\varepsilon})})$, we have
$$N_{\alpha}(u_{\varepsilon})\leq C_{\alpha}\frac{\log(\frac{1}{\varepsilon})^{\frac{1}{2}-\alpha}}{\varepsilon^{\frac{\alpha}{4}}}.$$
So, for $C_0>0$,
 $$\lim_{\varepsilon\rightarrow 0}\frac{\|\Delta u_{\varepsilon}\|_{L^2}^2}{\|u_{\varepsilon}\|_{L^{\infty}}^2}\log\Big[e^3+C_0N_{\alpha}(u_\varepsilon)\sqrt{\log(2e+N_\alpha(u_\varepsilon))}\Big]\leq 8\pi^2\alpha.$$
Finally
$$\inf_{u\in(H^2_{0,rad}\cap \dot C^{\alpha})(B)}\frac{\|\Delta u\|_{L^2}^2\log\Big[e^3+C_0N_{\alpha}(u)\sqrt{\log(2e+N_\alpha(u))}\Big]}{\|u\|_{L^{\infty}}^2}\leq 8\pi^2\alpha.$$\\
Let us prove the opposite inequality. Without loss of generality we can normalize $\|u\|_{L^{\infty}}=1$. Moreover, using a translation argument we may assume that $u(0)=1.$ Since $u$ vanishes on the boundary, we deduce that
$$\|u\|_{\dot C^{\alpha}}\geq|u(\frac{x}{|x|})-u(0)|=  1.$$
Moreover, if $\|u\|_{\dot C^{\alpha}}=1$ then $u(x)=1-|x|^{\alpha}$ and the inequality is evident.
In fact $1-u(x)=|u(x)-1|\leq |x|^{\alpha}$ thus $u(x)\geq 1-|x|^{\alpha}$, moreover if $u(x_0)>1-|x_0|^{\alpha}$ then $\frac{|u(x_0)-u(1)|}{1-|x_0|^{\alpha}}>1$ and $\|u\|_{\dot C^{\alpha}}>1$, which is absurd.
In the sequel we assume that $\|u\|_{\dot C^{\alpha}}>1$. For $D>1$, we denote the space
\begin{equation}\label{kd}
K_D:=\{u\in H_{0,rad}^2(B),\quad u(r)\geq 1-Dr^{\alpha},\quad\mbox{for any}\quad 0<r\leq1\}.
\end{equation}
It is sufficient to prove that for some $C_\alpha>0$, we have
$$8\pi^2\alpha\leq \inf_{D\geq 1}\inf_{u\in K_D}\|\Delta u\|_{L^2}^2\log\Big[e^3+\frac{C_\alpha D}{\|\Delta u\|_{L^2}}\sqrt{\log(2e+\frac{D}{\|\Delta u\|_{L^2}})}\Big].$$
Consider the minimizing problem
\begin{equation}\label{min}
I[u]:=\|\Delta u\|_{L^2(B)}^2
\end{equation}
among the functions belonging to the set $K_D$. This is a variational problem with obstacle. It has a unique minimizer $u^*$ which is variationally characterized by
\begin{equation}\label{cr}
\int_{B}\Delta v\Delta u^*\geq \|\Delta v\|_{L^2(B)}^2,\quad \forall v\in K_D.
\end{equation}
Moreover $u^*\in W^{3,\infty}(B)$, (see \cite{ks}). Hence we have an open radially symmetric set
$$\mathcal O:=\{x\in B,u^*(x)>1-D|x|^{\alpha}\}.$$
Now, for any $v\in C_0^{\infty}(\mathcal O)$ and any real number $|\tau|$ small enough, we have $u^*+\tau v\geq 1-Dr^{\alpha}$ thus $u^*+\tau v\in K_D$. So by \eqref{cr} we have
$$\int_{B}\Delta(u^*+\tau v)\Delta u^*\geq \|\Delta u^*\|_{L^2(B)}^2.$$
Taking $\tau$ positive then negative and $v^*:=\Delta u^*$, we have
$$\int_{\mathcal O} v^* \Delta v=0,\quad\forall v\in C_0^{\infty}(\mathcal O).$$
Thus $u^*$ is biharmonic on $\mathcal O$,
$$\frac1{r^3}\frac{d}{dr}(r^3\frac{dv^*}{dr})=\Delta v^*=\Delta^2u^*=0\quad\mbox{and}\quad u^*\in C^{\infty}(\mathcal O).$$
So, there exists two real numbers $a$ and $b$ such that
 $$v^*(r)=\frac{1}{r^3}\frac{d}{dr}(r^3\frac{du^*}{dr})=\frac{a}{r^2}+b.$$
With a straightforward computation, and using the boundary condition, there exists a real number $c$ such that
$$u^*(r)=-b-c+\frac{c}{r^2}+a\log(r)+br^2.$$
Now, by the boundary condition $\frac{du^*}{dr}(1)=0$, we have
 $$u^*(r)=-b-c+\frac{c}{r^2}+2(c-b)\log(r)+br^2.$$
Moreover, $u^*$ cannot start to be biharmonic at $r=0$ because of boundary condition. So there exists a real number $r_0\in (0,1]$ such that
 $$u^*(r)=\left\{\begin{array}{cccc}
1-Dr^{\alpha}&\mbox{if}&0\leq r\leq r_0,\\
b(r^2-1)+c(\frac{1}{r^2}-1)+2(c-b)\log(r)&\mbox{if}& r_0<r\leq 1.
\end{array}\right.$$
Since $u^*\in C^2(B)$, we have
$$\left\{\begin{array}{cccc}
1-Dr_0^{\alpha}&=&(r_0^2-1-2\log(r_0))b+(\frac{1}{r_0^2}-1+2\log(r_0))c,\\
-\alpha Dr_0^{\alpha-1}&=&2(r_0-\frac{1}{r_0})b+\frac{2}{r_0}(1-\frac{1}{r_0^2})c,\\
-\alpha(\alpha-1) Dr_0^{\alpha-2}&=&2(1+\frac{1}{r_0^2})b+\frac{2}{r_0^2}(\frac{3}{r_0^2}-1)c.
\end{array}\right.$$
We consider the two last equations
$$\left(\begin{array}{c}
-\alpha Dr_0^{\alpha-1}\\
-\alpha(\alpha-1) Dr_0^{\alpha-2}
\end{array}\right)=
\left(
\begin{array}{cc}
2(r_0-\frac{1}{r_0})&\frac{2}{r_0}(1-\frac{1}{r_0^2})\\
2(1+\frac{1}{r_0^2})&\frac{2}{r_0^2}(\frac{3}{r_0^2}-1)
\end{array}
\right)
\left(
\begin{array}{c}
b\\
c
\end{array}
\right):=A\left(
\begin{array}{c}
b\\
c
\end{array}
\right).$$
Let $x:=r_0^2$. With a simple computation, we obtain
\begin{gather*}
\det(A)=-\frac{8}{r_0^5}(r_0^2-1)^2,\\
b=\frac{\alpha D}{4}\frac{x^{\frac{\alpha}{2}}}{1-x}[\alpha+\frac{2}{1-x}],\\
c=\frac{\alpha D}{4}\frac{x^{\frac{\alpha}{2}+1}}{x-1}[\alpha-2+\frac{2}{1-x}].
\end{gather*}
Substituting in the first equation of the precedent system, we obtain
$$D(x)=\frac{4(x-1)^2}{x^{\frac{\alpha}{2}}[4(x-1)^2+\alpha(2+\alpha(1-x))(x-1-\log(x))+\alpha ((\alpha-2)(1-x)-2)(1-x+x\log(x))]}.$$
Now, let us compute $\|\Delta u^*\|_{L^2(B)}^2$. 
Since
$$\Delta u^*(r)=\left\{\begin{array}{ccc}
-D\alpha(\alpha+2)r^{\alpha-2}&\mbox{if}&0\leq r\leq r_0,\\
\frac{4(c-b)}{r^2}+8b&\mbox{if}&r_0\leq r\leq 1,
\end{array}\right.$$
we obtain
\begin{eqnarray*}
\|\Delta u^*\|_{L^2(B)}^2
&=&2\pi^2\Big[(D\alpha(\alpha+2))^2\int_0^{r_0}r^{2\alpha-1}dr+\int_{r_0}^1r^3(\frac{4(c-b)}{r^2}+8b)^2dr\Big]\\
&=&(\pi (\alpha+2))^2\alpha D^2x^{\alpha}-16\pi^2(c-b)^2\log(x)+32\pi^2b^2(1-x^2)+64\pi^2b(c-b)(1-x).
\end{eqnarray*}
It follows that{\small
$$\|\Delta u^*\|_{L^2(B)}^2=\pi^2x^{\alpha}D^2\Big[\alpha(\alpha+2)^2-\alpha^2\log x\frac{(\alpha+2-(\alpha-2)x^2)^2}{(1-x)^4}+\frac{2\alpha^2}{(1-x)^3}(\alpha+2-\alpha x)((\alpha -4)x^2+2x-\alpha-2)\Big].$$}
We denote{\small
\begin{eqnarray*}
g(x):&=&\pi^2x^{\alpha}\Big[\alpha(\alpha+2)^2-\alpha^2\log x\frac{(\alpha+2-(\alpha-2)x^2)^2}{(1-x)^4}+\frac{2\alpha^2}{(1-x)^3}(\alpha+2-\alpha x)((\alpha -4)x^2+2x-\alpha-2)\Big],\\
D(x)&=&\frac{4(x-1)^2}{x^{\frac{\alpha}{2}}[4(x-1)^2+\alpha(2+\alpha(1-x))(x-1-\log(x))+\alpha ((\alpha-2)(1-x)-2)(1-x+x\log(x))]},\\
F_C(x)&:=&\|\Delta u^*\|_{L^2(B)}^2\log\Big[e^3+CN_{\alpha}(u^*)\sqrt{\log(2e+N_{\alpha}(u^*))}\Big],\\
&=&D^2(x)g(x)\log\Big[e^3+C\sqrt{\frac{\log(2e+1/\sqrt{g(x)})}{g(x)}}\Big].
\end{eqnarray*}}
It is sufficient to prove that a constant $C_\alpha$ exists such that
\begin{equation}
F_{C_\alpha}\geq 8\pi^2\alpha\quad \mbox{on}\quad (0,1].
\end{equation}
We have
$$g(x)\displaystyle\backsim\pi^2\alpha^2(\alpha+2)^2x^{\alpha}\log(\frac{1}{x})\quad\mbox{and}\quad D(x)\backsim\frac{4}{\alpha(2+\alpha)x^{\frac{\alpha}{2}}\log(\frac{1}{x})},$$
where $\backsim$ is used to indicate that the ratio of the two sides goes to $1$ as $x$ goes to zero. Thus
\begin{eqnarray*}
F_C(x)&\backsim&
\Big(\frac{4}{\alpha(2+\alpha)x^{\frac{\alpha}{2}}(-\log(x))}\Big)^2\pi^2\alpha^2(\alpha+2)^2x^{\alpha}\log(\frac{1}{x})\log(e^3+\frac{C}{\sqrt{g(x)}})\\
&\backsim&16\pi^2\frac{\log\Big(e^3+\frac{C}{\sqrt{g(x)}}\Big)}{\log(\frac{1}{x})}\\&\backsim& 8\pi^2\alpha.
\end{eqnarray*}
Consequently, there exists $x_{\alpha}\in (0,1)$ such that
$$ F_C(x)\geq 8\pi^2\alpha\quad\mbox{for all}\quad x\in [0,x_{\alpha}].$$
Now, to study the behaviour of $D(x)$ for $x\to 1$, we denote $$y:=x-1,\quad h(y):=4y^2+\alpha(2-\alpha y)(y-\log(y+1))-\alpha ((\alpha-2)y+2)(-y+(y+1)\log(1+y)).$$
An easy computation yields to
\begin{eqnarray*}
h(y)&=&4y^2+\alpha(2-\alpha y)(y-\log(y+1))-\alpha ((\alpha-2)y+2)(-y+(y+1)\log(1+y))\\
&=&4y^2+\alpha(2-\alpha y)(\frac{y^2}{2}+o(y^2))-\alpha ((\alpha-2)y+2)(\frac{y^2}{2}+o(y^2))\\
&=&4y^2+o(y^2),\quad\mbox{as}\quad y\rightarrow 0.
\end{eqnarray*}
Hence,
 $$D(1^-)=1\quad\mbox{and}\quad \inf_{[x_{\alpha},1]} D=D(x_{\alpha})>0.$$
Moreover, $g\geq 0$ and $g(x)\neq0$ for any $x\in (0,1]$ because if $g(x)=0$ then $u^*$ is harmonic on $B$, which is absurd. Thus $g\geq y_{\alpha}>0$,  $\frac1{g}\leq y_{\alpha}$ on $[x_{\alpha},1]$ and
\begin{eqnarray*}
F_C(x)
&\geq& D^2(x_{\alpha})y_{\alpha}\log(e^3+C\sqrt{y_{\alpha}}).
\end{eqnarray*}
Taking $C_{\alpha}=1+\frac{e^{\frac{8\pi^2\alpha}{ D^2(x_{\alpha})y_{\alpha}}}}{\sqrt{y_\alpha}}$,
 we have
$$F_{C_\alpha}(x)\geq 8\pi^2\alpha\quad\mbox{for all}\quad x\in [0,1].$$

\section{Proof of Theorem \ref{t2}}
The proof of Theorem \ref{t2} is similar to that of Theorem \ref{t1}. \\
Let $\lambda>\frac{1}{8\pi^2\alpha}$, in order to prove \eqref{log} it is sufficient to prove that for some $C_{\lambda}>0$, we have
$$\inf_{u\in(H^2_{0,rad}\cap \dot C^{\alpha})(B)}\frac{\|\Delta u\|_{L^2}^2\log(C_{\lambda}+N_{\alpha}(u))}{\|u\|_{L^{\infty}}^2}\geq\frac{1}{\lambda}.$$
Arguing as previously, it is sufficient to prove that for some $C_{\lambda}>0$, we have
$$\frac{1}{\lambda}\leq \inf_{D\geq 1}\inf_{u\in K_D}\|\Delta u\|_{L^2}^2\log\Big(C_{\lambda}+\frac{D}{\|\Delta u\|_{L^2}}\Big),$$
where the set $K_D$ is already defined in \eqref{kd}. Since for all $C>1$, the function
$$t\longmapsto t^2\log(C+\frac{1}{t})$$
is increasing, it is sufficient to minimize $I[u]$ among the functions belonging to the set $K_D$. Consider $u^*$ a such minimizer. Recall that with previous computations, we have{\small
\begin{gather*}
\|\Delta u^*\|_{L^2(B)}^2=\pi^2x^{\alpha}D^2\Big[\alpha(\alpha+2)^2-\alpha^2\log x\frac{(\alpha+2-(\alpha-2)x^2)^2}{(1-x)^4}+\frac{2\alpha^2}{(1-x)^3}(\alpha+2-\alpha x)((\alpha -4)x^2+2x-\alpha-2)\Big],\\
H(x):=\|\Delta u^*\|_{L^2(B)}^2\log(C+N_{\alpha}(u^*))=D^2(x)g(x)\log\Big(C+\frac{1}{\sqrt{g(x)}}\Big),\\
g(x):=\pi^2x^{\alpha}\Big[\alpha(\alpha+2)^2-\alpha^2\log x\frac{(\alpha+2-(\alpha-2)x^2)^2}{(1-x)^4}+\frac{2\alpha^2}{(1-x)^3}(\alpha+2-\alpha x)((\alpha -4)x^2+2x-\alpha-2)\Big].
\end{gather*}
$$D(x)=\frac{4(x-1)^2}{x^{\frac{\alpha}{2}}[4(x-1)^2+\alpha(2+\alpha(1-x))(x-1-\log(x))+\alpha ((\alpha-2)(1-x)-2)(1-x+x\log(x))]}.$$}
Recall also that
$$g(x)\displaystyle\backsim\pi^2\alpha^2(\alpha+2)^2x^{\alpha}\log(\frac{1}{x}),\quad D(x)\backsim\frac{4}{\alpha(2+\alpha)x^{\frac{\alpha}{2}}\log(\frac{1}{x})}\quad\mbox{and}\quad H(x)\backsim8\pi^2\alpha.$$
Therefore, there exists $x_{\lambda}\in (0,1)$ such that
$$\lambda H(x)\geq 1\quad\mbox{for all}\quad x\in [0,x_{\lambda}].$$
Moreover, via previous calculus $$D(1^-)=1\quad\mbox{and}\quad \inf_{[x_{\lambda},1]} D=D(x^{\lambda})>0.$$
Note also that $g\geq 0$ and $g(x)\neq 0,\,\forall x\in (0,1]$ because if $g(x)=0$ then $u^*$ is harmonic on $B$ which is absurd. Thus $g\geq y_{\lambda}>0$ on $[x_{\lambda},1]$. So
\begin{eqnarray*}
\lambda H(x)
&\geq& \lambda D^2(x^{\lambda})y_{\lambda}\log(C_{\lambda}).
\end{eqnarray*}
Taking $C_{\lambda}=1+e^{\frac{1}{\lambda D^2(x_{\lambda})y_{\lambda}}}$, we have
$$\lambda H(x)\geq 1\quad\mbox{for all}\quad x\in [0,1].$$
Now, let us prove that \eqref{log} is false for $\lambda=\frac{1}{8\pi^2\alpha}$ which means that it is sharp. Precisely, we show that a sequence of functions $u_n\in (H_{0,rad}^2\cap \dot C^\alpha)(B)$ exists such that for $n$ big enough the following holds
$$\|u_n\|_{L^\infty}^2>\frac1{8\pi^2\alpha}\|\Delta u_n\|_{L^2}^2\log\Big(n^{\frac\alpha 2}+N_\alpha(u_n)\Big).$$
 Take the sequence $x_n:=\frac1n:=a_n^2$ and the sequence of functions
 $$u_n(r):=\left\{\begin{array}{cccc}
1-D_nr^{\alpha}&\mbox{if}&0\leq r\leq a_n,\\
b_n(r^2-1)+c_n(\frac{1}{r^2}-1)+2(c_n-b_n)\log(r)&\mbox{if}& a_n<r\leq 1,
\end{array}\right.$$
where{\small
\begin{gather*}
D_n=\frac{4(x_n-1)^2}{x_n^{\frac{\alpha}{2}}\Big[4(x_n-1)^2+\alpha(2+\alpha(1-x_n))(x_n-1-\log(x_n))+\alpha ((\alpha-2)(1-x_n)-2)(1-x_n+x_n\log(x_n))\Big]},\\
b_n=\frac{\alpha D_n}{4}\frac{x_n^{\frac{\alpha}{2}}}{1-x_n}\Big[\alpha+\frac{2}{1-x_n}\Big],\\
c_n=\frac{\alpha D_n}{4}\frac{x_n^{\frac{\alpha}{2}+1}}{x_n-1}\Big[\alpha-2+\frac{2}{1-x_n}\Big].
\end{gather*}}
Using previous computations it is sufficient to prove that
\begin{eqnarray*}
H_n&:=&\|\Delta u_n\|_{L^2}^2\log\Big(n^{\frac\alpha 2}+N_{\alpha}(u_n)\Big)\\
&=&\|\Delta u_n\|_{L^2}^2\log\Big(n^{\frac\alpha 2}+\frac{D_n}{\|\Delta u_n\|_{L^2}}\Big)\\
&=&D_n^2g_n\log\Big(n^{\frac\alpha 2}+\frac{1}{\sqrt{g_n}}\Big)<8\pi^2\alpha,
\end{eqnarray*}
where $g_n:=g(x_n)$ and
$$g(x):=\pi^2x^{\alpha}\Big[\alpha(\alpha+2)^2-\alpha^2\log x\frac{(\alpha+2-(\alpha-2)x^2)^2}{(1-x)^4}+\frac{2\alpha^2}{(1-x)^3}(\alpha+2-\alpha x)((\alpha -4)x^2+2x-\alpha-2)\Big].$$
We have, for some sequence of positive real numbers $\beta_n$ vanishing at infinity,
$$g_n<\pi^2\alpha^2(2+\alpha)^2x_n^{\alpha}\log(\frac{1}{x_n})(1+\beta_n)\quad\mbox{and}\quad D_n\backsim\frac{4}{\alpha(2+\alpha)x_n^{\frac{\alpha}{2}}\log(\frac{1}{x_n})}.$$
Where $\backsim$ is used here to indicate that the ratio of the two sides goes to $1$ when $n$ goes to infinity. Thus, for some sequence $\beta_n$ vanishing at infinity,
\begin{eqnarray*}
H_n&<&D_n^2\pi^2\alpha^2(2+\alpha)^2x_n^{\alpha}\log(\frac{1}{x_n})(1+\beta_n)\log\Big(n^{\frac\alpha 2}+\frac{1}{\sqrt{\pi^2\alpha^2(2+\alpha)^2x_n^{\alpha}\log(\frac{1}{x_n})(1+\beta_n)}}\Big)\\
&<&\frac{16\pi^2}{\log(\frac{1}{x_n})}(1+\beta_n)\log\Big(n^{\frac\alpha 2}+\frac{1}{\pi\alpha(2+\alpha)\sqrt{x_n^{\alpha}\log(\frac{1}{x_n})(1+\beta_n)}}\Big)\\
&<&\frac{16\pi^2}{\log(\frac{1}{x_n})}(1+\beta_n)\Big[\frac{\alpha}{2}\log(\frac{1}{x_n})+\log\Big(n^{\frac\alpha 2}x_n^{\frac{\alpha}{2}}+\frac{1}{\pi\alpha(2+\alpha)\sqrt{\log(\frac{1}{x_n})(1+\beta_n)}}\Big)\Big]\\
\end{eqnarray*}
To conclude, it is sufficient to take the limit as $n$ goes to infinity.
\section{Case of the whole space}
Theorems \ref{t1} and \ref{t2} were stated in the unit ball. If the function $u$ is supported in a $B_R$, a simple scaling argument gives
$$\|u\|_{L^{\infty}(B_R)}^2\leq\frac1{8\pi^2\alpha}\|\Delta u\|_{L^2(B_R)}^2\log\Big[e^3+C_0R^\alpha N_\alpha(u)\sqrt{\log(2e+R^{\alpha}N_{\alpha}(u))}\Big].$$
Similarly, a simple scaling argument in Theorem \ref{t2} yields
\begin{cor}{\bf (Log estimate)}\label{cr1}
Let $\alpha\in (0,1)$. For any $\lambda>\frac{1}{8\pi^2\alpha}$ there exists $C_{\lambda}>0$ such that for any $R>0$ and any radial function $u\in(H^2_0\cap \dot C^{\alpha})(B_R)$, we have
\begin{equation}\label{logR}
\|u\|_{L^{\infty}}^2\leq\lambda\|\Delta u\|_{L^2}^2\log\Big(C_{\lambda}+R^{\alpha}N_{\alpha}(u)\Big).
\end{equation}
\end{cor}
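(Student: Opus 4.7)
\medskip

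\noindent\textbf{Proof proposal for Corollary \ref{cr1}.}

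The plan is to reduce the statement on $B_R$ to Theorem \ref{t2} on the unit ball $B$ by a one-parameter rescaling. Given a radial $u\in (H^2_0\cap \dot C^\alpha)(B_R)$, I would set
$$v(x):=u(Rx),\qquad x\in B,$$
so that $v$ is radial and belongs to $(H^2_0\cap \dot C^\alpha)(B)$. The three relevant norms transform as follows: since the $L^\infty$ norm is invariant under the change of variables, $\|v\|_{L^\infty(B)}=\|u\|_{L^\infty(B_R)}$; since $\Delta v(x)=R^2\,\Delta u(Rx)$, the substitution $y=Rx$ (with Jacobian $R^{-4}$) gives
$$\|\Delta v\|_{L^2(B)}^2=R^4\int_{B}|\Delta u(Rx)|^2\,dx=\|\Delta u\|_{L^2(B_R)}^2;$$
finally, a direct computation yields $\|v\|_{\dot C^\alpha}=R^\alpha\|u\|_{\dot C^\alpha}$. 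Consequently
$$N_\alpha(v)=\frac{\|v\|_{\dot C^\alpha}}{\|\Delta v\|_{L^2}}=R^\alpha N_\alpha(u).$$

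Applying Theorem \ref{t2} to $v$, for any $\lambda>\frac{1}{8\pi^2\alpha}$ there is $C_\lambda>0$ such that
$$\|v\|_{L^\infty}^2\le\lambda\,\|\Delta v\|_{L^2}^2\log\bigl(C_\lambda+N_\alpha(v)\bigr),$$
and substituting back the identities above immediately yields \eqref{logR}. There is no real obstacle here: the content of the corollary is entirely the scaling invariance of the three quantities involved, and the constant $C_\lambda$ from Theorem \ref{t2} can be kept as is. The only minor point to watch is the density/approximation needed to justify the rescaling at the level of $H^2_0$ and $\dot C^\alpha$ (i.e.\ that $v$ indeed lies in $(H^2_0\cap\dot C^\alpha)(B)$ whenever $u\in(H^2_0\cap\dot C^\alpha)(B_R)$), which is standard.
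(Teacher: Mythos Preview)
Your proposal is correct and matches the paper's own approach: the paper simply states that the corollary follows from Theorem~\ref{t2} by ``a simple scaling argument,'' which is precisely the rescaling $v(x)=u(Rx)$ you carry out. Your verification that $\|\Delta v\|_{L^2(B)}=\|\Delta u\|_{L^2(B_R)}$ (using the four-dimensional Jacobian $R^{-4}$ against the factor $R^4$ from $\Delta v=R^2\Delta u(R\cdot)$) and $\|v\|_{\dot C^\alpha}=R^\alpha\|u\|_{\dot C^\alpha}$ is exactly what is needed, and the constant $C_\lambda$ carries over unchanged.
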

Now, in the whole space we have the following result.
\begin{cor}{\bf (Global Log estimate)}\label{cr2}
Let $\alpha\in (0,1)$. For any $\lambda>\frac{1}{8\pi^2\alpha}$ and any $\mu\in (0,1]$, there exists $C_{\lambda}>0$ such that for any radial function $u\in(H^2\cap C^{\alpha})(\R^4)$, we have
\begin{equation}\label{logR}
\|u\|_{L^{\infty}}^2\leq\lambda\| u\|_{\mu}^2\log\Big(C_{\lambda}+\frac{8^{\alpha}\mu^{-\alpha}\|u\|_{C^{\alpha}}}{\|u\|_{\mu}}\Big),
\end{equation}
where $\|u\|_{\mu}^2:=(1+3\mu)\|\Delta u\|_{L^2}^2+3\mu\|u\|_{H^1}^2.$
\end{cor}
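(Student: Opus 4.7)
The plan is to reduce the global estimate to Corollary \ref{cr1} (the local log inequality on a ball) via a scaled smooth cutoff. Fix a radial $\varphi\in C_c^\infty(B_8)$ with $0\le\varphi\le 1$ and $\varphi\equiv 1$ on $B_4$, chosen so that $\|\nabla\varphi\|_{L^\infty}$ and $\|\Delta\varphi\|_{L^\infty}$ are sufficiently small (this is possible because the transition region has width $4$). For $\mu\in(0,1]$, set $\varphi_\mu(x):=\varphi(\mu x)$; then $\varphi_\mu\equiv 1$ on $B_{4/\mu}$, $\mathrm{supp}\,\varphi_\mu\subset B_{8/\mu}$, and $\|\nabla^j\varphi_\mu\|_{L^\infty}=O(\mu^j)$ for $j=1,2$. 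The function $v:=\varphi_\mu u$ is radial and belongs to $(H_0^2\cap\dot C^\alpha)(B_{8/\mu})$, so Corollary \ref{cr1} applies with $R=8/\mu$ and yields $\|v\|_{L^\infty}^2\le\lambda\|\Delta v\|_{L^2}^2\log\bigl(C_\lambda+(8/\mu)^\alpha\|v\|_{\dot C^\alpha}/\|\Delta v\|_{L^2}\bigr)$.

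The argument now converts this into the target estimate via three auxiliary bounds. First, expanding $\Delta v=\varphi_\mu\Delta u+2\nabla\varphi_\mu\cdot\nabla u+u\Delta\varphi_\mu$ and applying $(a+b)^2\le(1+3\mu)a^2+(1+\tfrac{1}{3\mu})b^2$, the smallness of the $C^2$-seminorms of $\varphi_\mu$ allows absorption of the cross terms, giving $\|\Delta v\|_{L^2}^2\le(1+3\mu)\|\Delta u\|_{L^2}^2+3\mu\|u\|_{H^1}^2=\|u\|_\mu^2$. Second, the H\"older product inequality $\|fg\|_{\dot C^\alpha}\le\|f\|_{L^\infty}\|g\|_{\dot C^\alpha}+\|f\|_{\dot C^\alpha}\|g\|_{L^\infty}$ yields $\|v\|_{\dot C^\alpha}\le c\|u\|_{C^\alpha}$ with $c$ absolute (since $\|\varphi_\mu\|_{\dot C^\alpha}$ is uniformly bounded for $\mu\in(0,1]$). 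Third, the loss $\|u\|_{L^\infty}-\|v\|_{L^\infty}$ is controlled by $\sup_{|x|\ge 4/\mu}|u(x)|$; by the radial Strauss embedding $|u(x)|\lesssim\|u\|_{H^1}/|x|^{3/2}$ in $\R^4$, this gap is $\lesssim\mu^{3/2}\|u\|_{H^1}$, whose square is $O(\mu^2)\|u\|_\mu^2$ after using $\|u\|_\mu^2\ge 3\mu\|u\|_{H^1}^2$.

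Inserting these bounds, the right-hand side of the local estimate becomes $\lambda\|u\|_\mu^2\,t^2\log(C_\lambda+X/t)$, where $t:=\|\Delta v\|_{L^2}/\|u\|_\mu\in(0,1]$ and $X:=c\,8^\alpha\mu^{-\alpha}\|u\|_{C^\alpha}/\|u\|_\mu$. The elementary inequality $t^2\log(C_\lambda+X/t)\le\log(C'_\lambda+X)$, valid for $t\in(0,1]$ once $C'_\lambda$ is chosen large enough (using that $\sup_{t\in(0,1]}t^2\log(1/t)=1/(2e)$), reverses the direction of the loss that $\|\Delta v\|_{L^2}$ may be smaller than $\|u\|_\mu$. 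After absorbing the multiplicative constant $c$ into $C'_\lambda$ (working from a starting $\lambda_1\in(1/(8\pi^2\alpha),\lambda)$ to leave room) and combining with the $L^\infty$-gap term via Young's inequality, one obtains the claimed bound.

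The main obstacle is the precise bookkeeping in the first step: Young's inequality produces the coefficients $(1+3\mu)$ and $3\mu$ only for the right pairing between the splitting parameter and the $\mu$-scaling of the cutoff, which also fixes the scale $R=8/\mu$ (matching the factor $8^\alpha\mu^{-\alpha}$ in the log). The apparent issue that the maximum of $u$ could sit outside the support of $\varphi_\mu$ is resolved by the radial Strauss estimate, whose polynomial decay is significantly stronger than what is needed for absorption.
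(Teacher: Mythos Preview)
Your approach is essentially the paper's: localize by a scaled radial cutoff supported in $B_{8/\mu}$, apply Corollary~\ref{cr1} with $R=8/\mu$, show $\|\Delta(\varphi_\mu u)\|_{L^2}^2\le\|u\|_\mu^2$ by expanding the Laplacian of the product, and finish with a monotonicity argument. The proof is correct.

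Two differences are worth noting. First, the paper simply declares ``without loss of generality $\|u\|_{L^\infty}=|u(0)|$'' so that $\|\varphi_\mu u\|_{L^\infty}=\|u\|_{L^\infty}$; you instead bound the possible gap by the radial Strauss estimate $|u(x)|\lesssim|x|^{-3/2}\|u\|_{H^1}$. Your route is the more robust one, since for a general radial $u$ the maximum need not sit at the origin and translation would destroy radiality (which Corollary~\ref{cr1} requires). Second, for the monotonicity step the paper uses directly that $x\mapsto x^2\log(C+A/x)$ is increasing for $C$ large, which yields $t^2\log(C+X/t)\le\log(C+X)$ with $C'=C$. Your stated justification via $\sup_{t\in(0,1]}t^2\log(1/t)=1/(2e)$ only produces $t^2\log(C+X/t)\le\log(C+X)+1/(2e)$, and that additive $1/(2e)$ cannot be pushed into a larger $C'$ uniformly in $X$; it has to be absorbed using the slack $\lambda_1<\lambda$ (as you do for the constant $c$). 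Either replace the parenthetical by the direct monotonicity, or make explicit that the additive constant is handled the same way as $c$.
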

\begin{proof}
Let $\alpha\in (0,1)$, $\lambda>\frac{1}{8\pi^2\alpha}$, $\mu\in (0,1]$ and a radial function $u\in(H^2\cap C^{\alpha})(\R^4)$. Fix a radially symmetric function $\phi\in C_0^{\infty}(B_4)$ such that $0\leq \phi\leq 1$, $\phi =0$ near zero and $|\nabla \phi|\leq 1, |\Delta \phi|\leq 1$. Let $\phi_{\mu}:=\phi(\frac{\mu}{2}.)$ and $u_{\mu}:=\phi_{\mu}u$. Assume (without loss of generality) that $\|u\|_{L^{\infty}}=|u(0)|$. Then,
$$\|u_{\mu}\|_{L^{\infty}}=\|u\|_{L^{\infty}}\quad\mbox{and}\quad \|u_{\mu}\|_{\dot C^{\alpha}}\leq \|u\|_{C^{\alpha}}.$$
Applying Corollary \ref{cr1}, we obtain
$$\|u\|_{L^{\infty}}^2\leq\lambda\|\Delta u_{\mu}\|_{L^2}^2\log\Big(C_{\lambda}+\frac{8^{\alpha}\mu^{-\alpha}\|u\|_{ C^{\alpha}}}{\|\Delta u_{\mu}\|_{L^2}^2}\Big).$$
Now,
\begin{eqnarray*}
\|\Delta u_{\mu}\|_{L^2}^2
&=&\|\Delta \phi_{\mu}u\|_{L^2}^2+\|\Delta u\phi_{\mu}\|_{L^2}^2+4\|\nabla \phi_{\mu}\nabla u\|_{L^2}^2\\
&+&2\int\Delta \phi_{\mu}u\phi_{\mu}\Delta u+
4\int\Delta \phi_{\mu}u\nabla\phi_{\mu}\nabla u+4\int \phi_{\mu}\Delta u\nabla \phi_{\mu}\nabla u\\
&\leq&\frac{\mu^4}{16}\|u\|_{L^2}^2+\|\Delta u\|_{L^2}^2+\mu^2\|\nabla u\|_{L^2}^2.\\
&+&2(I)+4(II)+4(III),
\end{eqnarray*}
where
$$(I)=\int\Delta \phi_{\mu}u\phi_{\mu}\Delta u\leq \frac{\mu^2}{8}(\|u\|_{L^2}^2+\|\Delta u\|_{L^2}^2),$$
$$(II)=\int\Delta \phi_{\mu}u\nabla\phi_{\mu}\nabla u\leq \frac{\mu^3}{16}(\|u\|_{L^2}^2+\|\nabla u\|_{L^2}^2),$$
$$(II)=\int \phi_{\mu}\Delta u\nabla \phi_{\mu}\nabla u\leq \frac{\mu}{4}(\|\Delta u\|_{L^2}^2+\|\nabla u\|_{L^2}^2).$$
The proof is achieved because $x\rightarrow x^2\log(C_{\lambda}+\frac{C}{x}),\,\, C>0$ is increasing.
\end{proof}
We also have the following result
\begin{cor}
Let $\alpha\in (0,1)$. For any $\lambda>\frac1{8\pi^2\alpha}$, a constant $C_{\lambda}>0$ exists such that for any radial function $u\in(H^2\cap C^{\alpha})(\R^4)$, we have
$$\|u\|_{L^{\infty}}\leq\| u\|_{L^2}+\|\Delta u\|_{L^2}\sqrt{\lambda\log\Big(e+C_{\lambda}\frac{\|u\|_{C^{\alpha}}}{\|\Delta u\|_{L^2}}\Big)}.$$
\end{cor}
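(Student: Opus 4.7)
The plan is to derive the bound from Corollary \ref{cr2} by choosing the parameter $\mu$ there carefully and then splitting the resulting square root.

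First, with $\lambda':=\tfrac12\bigl(\lambda+\tfrac{1}{8\pi^2\alpha}\bigr)$ (so that $\lambda>\lambda'>\tfrac{1}{8\pi^2\alpha}$), Corollary \ref{cr2} applied with parameter $\lambda'$ gives for every $\mu\in(0,1]$
\[
\|u\|_{L^\infty}^2\le\lambda'\,\|u\|_\mu^2\,\log\!\Big(C_{\lambda'}+\frac{8^\alpha\mu^{-\alpha}\|u\|_{C^\alpha}}{\|u\|_\mu}\Big).
\]
Using $\|u\|_\mu\ge\|\Delta u\|_{L^2}$ (every contribution to $\|u\|_\mu^2$ is non-negative), one can replace $\|u\|_\mu$ in the denominator of the log by $\|\Delta u\|_{L^2}$; once $\mu$ is fixed in terms of $\lambda$ alone, the factor $8^\alpha\mu^{-\alpha}$ is an absolute constant absorbed into a new $C_\lambda$, so the log becomes $\mathcal L:=\log\!\bigl(e+C_\lambda\|u\|_{C^\alpha}/\|\Delta u\|_{L^2}\bigr)$.

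Next, I would control $\|u\|_\mu^2$ by $\|\Delta u\|_{L^2}^2$ and $\|u\|_{L^2}^2$ alone. From the interpolation $\|\nabla u\|_{L^2}^2\le\|u\|_{L^2}\|\Delta u\|_{L^2}\le\tfrac12\bigl(\|u\|_{L^2}^2+\|\Delta u\|_{L^2}^2\bigr)$, the definition of $\|u\|_\mu^2$ yields
\[
\|u\|_\mu^2\le\Big(1+\tfrac{9\mu}{2}\Big)\|\Delta u\|_{L^2}^2+\tfrac{9\mu}{2}\|u\|_{L^2}^2.
\]
Fixing $\mu$ small enough that $\lambda'(1+\tfrac{9\mu}{2})\le\lambda$ and applying $\sqrt{A+B}\le\sqrt A+\sqrt B$, one finds
\[
\|u\|_{L^\infty}\le\sqrt{\lambda\,\mathcal L}\,\|\Delta u\|_{L^2}+\sqrt{\tfrac{9\mu\lambda'}{2}\,\mathcal L}\,\|u\|_{L^2}.
\]

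The main obstacle is the parasitic factor $\sqrt{\mathcal L}$ in front of $\|u\|_{L^2}$, which is absent from the target statement. To eliminate it I would dichotomize on the size of $\mathcal L$: when $\mathcal L\le M(\lambda)$ for a threshold $M(\lambda)$ the factor is a constant, absorbed into $C_\lambda$ by enlarging it; in the regime $\mathcal L>M(\lambda)$ one lets $\mu$ depend on $u$ by choosing $\mu\sim 1/\mathcal L$, which renders $\sqrt{9\mu\lambda'\mathcal L/2}\le 1$. The implicit equation this creates for $\mathcal L$ closes because the $(\log\mathcal L)^\alpha$ contribution from $\mu^{-\alpha}$ can again be absorbed into a larger $C_\lambda$. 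The last ingredient is the monotonicity $x\mapsto x^2\log(C+C'/x)$ already used in the proof of Theorem \ref{t2}.
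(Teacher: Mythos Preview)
Your route diverges from the paper's and runs into a real obstacle that you have not actually closed.

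The paper does not try to remove the parasitic factor after applying Corollary~\ref{cr2} to $u$. Instead it splits $u$ \emph{before} invoking the log estimate: writing the Littlewood--Paley decomposition $u=\Delta_{-1}u+v$ with $v=\sum_{j\ge 0}\Delta_j u$, the low-frequency piece satisfies $\|\Delta_{-1}u\|_{L^\infty}\lesssim\|u\|_{L^2}$ by Bernstein, and the high-frequency piece $v$ has Fourier support away from the origin so that $\|v\|_{H^1}\le C\|\Delta v\|_{L^2}$. Applying Corollary~\ref{cr2} only to $v$ then gives $\|v\|_{\mu_1}\le(1+3\mu_1(1+C^2))^{1/2}\|\Delta v\|_{L^2}$, with no $\|u\|_{L^2}$ contribution whatsoever inside the square root. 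Choosing $\lambda_1,\mu_1$ so that $\lambda_1(1+3\mu_1(1+C^2))\le\lambda$ finishes the proof. The point is that the frequency cut-off manufactures a Poincar\'e-type inequality for $v$, which is precisely what is missing when you work with $u$ directly.

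In your argument the difficulty is genuine. In Case~1 you write that the bounded factor in front of $\|u\|_{L^2}$ is ``absorbed into $C_\lambda$ by enlarging it''; this cannot be right as stated, since $C_\lambda$ sits inside the logarithm multiplying $\|u\|_{C^\alpha}/\|\Delta u\|_{L^2}$ and has no leverage on the coefficient of $\|u\|_{L^2}$. One can rescue Case~1 by picking the threshold $M=2/(9\mu\lambda')$, but then Case~2 forces you to abandon the fixed $\mu$ and take $\mu\sim 1/\mathcal L$ depending on $u$. At that point $\mathcal L$ itself contains $\mu^{-\alpha}$, and the heuristic ``$(\log\mathcal L)^\alpha$ is absorbed'' hides a fixed-point argument that you have not carried out: you must show that one can choose $\mu=\mu(R)$ with $\mu L_\mu\le 2/(9\lambda')$ \emph{and} $\lambda'(1+\tfrac{9\mu}{2})L_\mu\le\lambda\log(e+C_\lambda R)$ uniformly in $R$, with $C_\lambda$ independent of $u$. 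This is plausible (the correction is indeed of order $\alpha\log L^*$ against a gain of order $(\lambda-\lambda')L^*$), but it requires a careful quantitative argument that your sketch does not provide. The paper's Littlewood--Paley splitting sidesteps all of this.
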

\begin{proof}
Take the Littlewood-Paley decomposition
$$u=\Delta_{-1}u+\sum_{j\in\N}\Delta_ju:=\Delta_{-1}u+v.$$
Then, applying the previous Corollary via $\|v\|_{C^\alpha}\leq\|u\|_{C^\alpha}$, yields, for any $\mu_1\in [0,1)$ and any $\lambda_1>\frac1{8\pi^2\alpha}$,
\begin{eqnarray*}
\|u\|_{L^\infty}
&\leq& \|\Delta_{-1}u\|_{L^\infty}+\|v\|_{L^\infty}\\
&\leq&\|u\|_{L^2}+\|v\|_{L^\infty}\\
&\leq&\|u\|_{L^2}+\|v\|_{\mu_1}\sqrt{\lambda_1\log\Big(C_{\lambda_1}+\frac{8^{\alpha}\mu_1^{-\alpha}\|u\|_{C^{\alpha}}}{\|v\|_{\mu_1}}\Big)}.
\end{eqnarray*}
Now, since $\|v\|_{H^1}\leq C\|\Delta v\|_{L^2}$, we have
$$\|v\|_{\mu_1}^2:=(1+3\mu_1)\|\Delta v\|_{L^2}^2+3\mu_1\|v\|_{H^1}^2\leq (1+3\mu_1(1+C^2))\|\Delta v\|_{L^2}^2.$$
To conclude the proof, we take $\lambda_1$ and $\mu_1$ such that $\lambda>\lambda_1(1+3\mu_1)(1+C^2)$.
\end{proof}
\begin{rem}
Of course we have similar results for the $\log\log$ inequality \eqref{logg} in $\R^4$ with the sharp constant $\frac1{8\pi^2\alpha}$.
\end{rem}
\section{Appendix}
In this section, following ideas of \cite{ks}, we prove a regularity result of the minimizing function $u^*$ of the problem \eqref{min}. Recall some notations. Take the radial function $\psi(r):=\psi_{D,\alpha}(r)=1-Dr^\alpha$ and the convex closed set
$$K_D:=\{v\in H_{0,rad}^2(B)\quad\mbox{s. th}\quad v\geq\psi\quad\mbox{on}\quad B\}.$$
Consider the minimizing problem $I[u]:=\|\Delta u\|_{L^2(B)}^2$ among the functions belonging to the set $K_D$. This is a variational problem with obstacle. It has a unique minimizer $u^*$ which is variationally characterized by
$$\int_{B}\Delta v\Delta u^*\geq \|\Delta v\|_{L^2(B)}^2,\quad \forall v\in K_D.$$
We give the following regularity result.
\begin{lem}
The minimizing function $u^*$ of the problem \eqref{min} satisfies 
$$u^*\in (W^{4,p}\cap H_0^2)(B),\quad\mbox{for any}\quad 1<p<\frac4{4-\alpha}.$$
\end{lem}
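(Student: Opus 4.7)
The plan is to combine the variational characterization of $u^*$ with standard $L^p$ theory for the bi-Laplacian. The key step is a Lewy--Stampacchia--type identification of $\Delta^2 u^*$ as a controlled measure on the coincidence set, whose size is dictated by the explicit obstacle $\psi(r)=1-Dr^\alpha$.

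First, I would extract the distributional meaning of $\Delta^2 u^*$ from the variational inequality
$$\int_B \Delta u^*\,\Delta(v-u^*)\,dx \geq 0,\qquad \forall\,v\in K_D.$$
Testing with $v=u^*+\phi$, $\phi\in C_0^\infty(B)$, $\phi\geq 0$ (admissible since $u^*+\phi\geq u^*\geq\psi$) yields $\Delta^2 u^*\geq 0$ in $\mathcal D'(B)$; testing with $\phi\in C_0^\infty(\mathcal O)$ of arbitrary sign, where $\mathcal O:=\{u^*>\psi\}$, yields $\Delta^2 u^*=0$ on $\mathcal O$. Hence $\Delta^2 u^*$ is a nonnegative Radon measure supported in the coincidence set $\{u^*=\psi\}$.

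Next, following the penalization approach of \cite{ks}, I would approximate $u^*$ by solutions $u_\varepsilon$ of the unconstrained equation $\Delta^2 u_\varepsilon=\beta_\varepsilon(\psi-u_\varepsilon)$, with $\beta_\varepsilon$ a smooth one-sided penalty and $u_\varepsilon=\partial_\nu u_\varepsilon=0$ on $\partial B$. Since $\beta_\varepsilon\geq 0$ and $\Delta^2\psi>0$ on $B\setminus\{0\}$, a comparison argument gives $0\leq \Delta^2 u_\varepsilon\leq (\Delta^2\psi)^+$, and passing to the limit produces the Lewy--Stampacchia bound
$$0 \leq \Delta^2 u^* \leq (\Delta^2\psi)^+ \qquad \text{in } B.$$
In particular, $\Delta^2 u^*$ has no singular part concentrated on the free boundary $\{r=r_0\}$ identified in the proof of Theorem~\ref{t1}, and is dominated a.e.\ by $\Delta^2\psi$.

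I would then compute $\Delta^2\psi$ explicitly. From the radial 4D formula $\Delta f=f''+\frac{3}{r}f'$ one gets $\Delta(r^\alpha)=\alpha(\alpha+2)r^{\alpha-2}$ and hence
$$\Delta^2\psi(r)\,=\,D\,\alpha^2(\alpha+2)(2-\alpha)\,r^{\alpha-4}\qquad\text{on } B\setminus\{0\},$$
which is positive since $0<\alpha<1$. A direct computation shows $r^{\alpha-4}\in L^p(B)$ if and only if $p(4-\alpha)<4$, i.e.\ $p<\frac{4}{4-\alpha}$. Combining with the Lewy--Stampacchia bound, $\Delta^2 u^*\in L^p(B)$ for every such $p$. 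Since $u^*\in H_0^2(B)$ with $\partial B$ smooth and $\Delta^2 u^*\in L^p(B)$, the Agmon--Douglis--Nirenberg $L^p$ theory for the biharmonic Dirichlet problem yields $u^*\in W^{4,p}(B)$, completing the proof.

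The main obstacle is the Lewy--Stampacchia step: for fourth-order obstacle problems this inequality is more subtle than in the Laplacian case, since it does not follow directly from the maximum principle and requires a penalization compatible with the $W^{3,\infty}$ regularity already available from \cite{ks}. The radial structure together with the explicit form of both $\psi$ and the coincidence set $\{r\leq r_0\}$ makes this verification tractable.
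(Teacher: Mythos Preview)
Your outline is correct and follows the same penalization-plus-$L^p$-regularity strategy as the paper, but the paper sidesteps precisely the difficulty you flag in your last paragraph. Instead of a generic one-sided penalty $\beta_\varepsilon(\psi-u_\varepsilon)$ followed by a separate Lewy--Stampacchia comparison, the paper takes the penalized equation in the form
\[
\Delta^2 u_\varepsilon=\Delta^2\psi\cdot\theta_\varepsilon(u_\varepsilon-\psi),
\]
where $\theta_\varepsilon$ is a uniformly Lipschitz, nonincreasing cutoff with $0\leq\theta_\varepsilon\leq 1$. Since $\Delta^2\psi(r)=D\alpha^2(4-\alpha^2)r^{\alpha-4}\geq 0$, the right-hand side is pointwise bounded by $\Delta^2\psi$, so the bound $\|\Delta^2 u_\varepsilon\|_{L^p}\leq\|\Delta^2\psi\|_{L^p}$ for every $1<p<\frac{4}{4-\alpha}$ is immediate, and the uniform $W^{4,p}$ estimate follows from the biharmonic $L^p$ theory without any comparison argument. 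The paper then (i) shows existence/uniqueness of $u_\varepsilon$ by checking that the associated operator is monotone and coercive on $H_0^2(B)$ (using that $\theta_\varepsilon$ is nonincreasing and $\Delta^2\psi\geq 0$), (ii) verifies $u_\varepsilon\in K_D$ by testing against $\zeta=u_\varepsilon-\max(u_\varepsilon,\psi)$, and (iii) identifies the weak $W^{4,p}$ limit with $u^*$ via two applications of Minty's lemma.

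In short: same skeleton, different penalty. Your route is the conceptually clean one (name the Lewy--Stampacchia inequality, then invoke ADN), but for the bi-Laplacian that inequality does not come from a maximum principle and you would still have to supply it---most likely by a penalization of exactly the paper's type. The paper's choice of penalty builds the upper bound $0\leq\Delta^2 u_\varepsilon\leq\Delta^2\psi$ into the equation from the start, so what you call ``the main obstacle'' simply does not arise. Your computation of $\Delta^2\psi$ and of the integrability threshold $p<\frac{4}{4-\alpha}$ is correct and matches the paper's.
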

The next result is known \cite{Ag,gg}.
\begin{lem}\label{reg}
Consider the equation
$$\Delta^2u = f\quad\mbox{ in }\quad B, \quad\mbox{ with }\quad u_{|\partial B}=\Delta u_{|\partial B} = 0.$$
If $f\in L^p(B)$ for some $1<p<\infty$, then the previous equation has a unique strong solution $u\in W^{4,p}(B)$ which satisfies the boundary condition in the trace sense, moreover
$$\|u\|_{W^{4,p}(B)}\leq C_p \|f\|_{L^p(B)}.$$
\end{lem}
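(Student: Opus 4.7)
The plan is to exploit the Navier boundary condition $u|_{\partial B} = \Delta u|_{\partial B} = 0$ to split the fourth-order problem into a cascade of two second-order Dirichlet problems for the Laplacian, and then invoke the classical Calder\'on--Zygmund (Agmon--Douglis--Nirenberg) $W^{2,p}$-theory on the ball. The factorization $\Delta^2 = \Delta \circ \Delta$ is compatible with the boundary data precisely because we are prescribing $u$ and $\Delta u$ (rather than $u$ and $\partial_\nu u$), which is what makes the reduction clean.

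Concretely, I would first solve the auxiliary Dirichlet problem
\begin{equation*}
-\Delta v = f \quad \mbox{in } B, \qquad v|_{\partial B}=0.
\end{equation*}
By the standard $W^{2,p}$-regularity for the Poisson equation on the smooth domain $B$, there exists a unique strong solution $v\in W^{2,p}(B)\cap W^{1,p}_0(B)$ with
\begin{equation*}
\|v\|_{W^{2,p}(B)} \leq C_p \|f\|_{L^p(B)}.
\end{equation*}
Next, I would solve the second Dirichlet problem
\begin{equation*}
-\Delta u = v \quad \mbox{in } B, \qquad u|_{\partial B}=0.
\end{equation*}
Since $v\in W^{2,p}(B)$, the same Calder\'on--Zygmund theory (applied one order higher, which on a smooth ball follows by differentiating the equation and reapplying the $W^{2,p}$-estimate, or directly by the ADN estimates) produces a unique $u\in W^{4,p}(B)\cap W^{1,p}_0(B)$ with
\begin{equation*}
\|u\|_{W^{4,p}(B)} \leq C_p \|v\|_{W^{2,p}(B)} \leq C_p \|f\|_{L^p(B)}.
\end{equation*}
By construction $\Delta^2 u = -\Delta v = f$ in $B$. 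The boundary condition $u|_{\partial B}=0$ holds by construction in the trace sense, and $\Delta u|_{\partial B} = -v|_{\partial B}=0$ also holds in the trace sense because $v\in W^{2,p}(B)$ has a well-defined trace on $\partial B$ that vanishes.

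For uniqueness, suppose $u_1,u_2$ are two such strong solutions and set $w=u_1-u_2$. Then $\Delta^2 w =0$ with $w|_{\partial B}=\Delta w|_{\partial B}=0$. Letting $\tilde v := -\Delta w \in W^{2,p}(B)$, we have $\Delta \tilde v = 0$ in $B$ with $\tilde v|_{\partial B}=0$, so $\tilde v\equiv 0$ by uniqueness for the Dirichlet problem for $-\Delta$; then $-\Delta w = 0$ with $w|_{\partial B}=0$ yields $w\equiv 0$. The main (and essentially only) technical point here is the invocation of the sharp $W^{2,p}$-estimate for the Laplacian on $B$ with $1<p<\infty$; since this is a classical result, I would simply cite \cite{Ag,gg} for both the existence of the strong $W^{2,p}$-solution and the elliptic estimate, and the whole proof reduces to applying it twice.
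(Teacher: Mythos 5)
Your proof is correct, but note that the paper does not actually prove this lemma at all: it is stated as a known result, with the argument delegated entirely to the references \cite{Ag,gg} (the Agmon--Douglis--Nirenberg estimates and the Gazzola--Grunau--Sweers monograph, where the Navier problem for $\Delta^2$ on smooth domains is treated). What you supply instead is a self-contained reduction: since the prescribed boundary data are $u$ and $\Delta u$ (Navier conditions) rather than $u$ and $\partial_\nu u$, the problem factors through the cascade $-\Delta v=f$, $v|_{\partial B}=0$, followed by $-\Delta u=v$, $u|_{\partial B}=0$, and the classical second-order Calder\'on--Zygmund theory applied twice (once at the $W^{2,p}$ level, once shifted to the $W^{4,p}$ level on the smooth ball) yields existence, the estimate $\|u\|_{W^{4,p}(B)}\leq C_p\|f\|_{L^p(B)}$, and both boundary conditions in the trace sense, with uniqueness obtained by running the same cascade on the difference of two solutions. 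All the ingredients you invoke (solvability of the Dirichlet problem in $W^{2,p}$ for $1<p<\infty$, the regularity shift $v\in W^{2,p}\Rightarrow u\in W^{4,p}$ on a smooth domain, and uniqueness for harmonic functions in $W^{2,p}$ with vanishing trace) are exactly what the cited references provide, so the argument is sound. Your route buys transparency: it makes clear that the Navier conditions are precisely what permit the factorization $\Delta^2=\Delta\circ\Delta$, and it only uses second-order theory; the paper's bare citation buys brevity. The one hypothesis worth flagging explicitly in your write-up is the smoothness of $\partial B$, which is needed for the $W^{4,p}$ shift in the second Dirichlet problem and is of course available here.
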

\begin{proof}
Take for $\varepsilon>0$ the function 
$$\theta_\varepsilon(t):=\left\{\begin{array}{cccc}
1&if&t\leq 0,\\
1-\frac{t}\varepsilon&if&0\leq t\leq \varepsilon,\\
0&if&t\geq \varepsilon.
\end{array}\right.$$
Clearly, the previous function is uniformly Lipschitz, non-increasing and satisfies $0\leq\theta_\varepsilon\leq 1$. Let now the penalized problem
\begin{equation}\label{pen}
\Delta^2u_\varepsilon=\Delta^2\psi\theta_\varepsilon(u_\varepsilon-\psi)\quad\mbox{on}\quad B.
\end{equation}
Taking the operator on $H_0^2(B)$,
$$<Lw,v>:=\int_B\Big(\Delta w\Delta v-\Delta^2\psi\theta_\varepsilon(w-\psi)v\Big)dx.$$
We compute, using the fact that $\theta_\varepsilon$ is nonincreasing and $\Delta^2\psi(r)=\alpha^2(4-\alpha^2)Dr^{\alpha-2}\geq 0$, 
\begin{eqnarray*}
<Lw-Lv,w-v>
&=&\int_B\Big([\Delta(w-v)]^2-\Delta^2\psi[\theta_\varepsilon(w-\psi)-\theta_\varepsilon(v-\psi)](w-v)\Big)dx\\
&\geq&\int_B[\Delta(w-v)]^2dx\geq C\|w-v\|_{H_0^2(B)}^2.
\end{eqnarray*}
Which implies that $L$ is strictly monotone and coercive. Moreover, if $w_n\rightarrow w$ in $H_0^2(B)$ then $Lw_n\rightharpoonup Lw$ weakly in $H^{-2}(B)$. Thus $L$ is continuous on finite dimensional subspaces of $H_0^2(B)$. Applying Corollary 1.8 of Chapter III in \cite{ks}, we have the existence of a unique $u_\varepsilon\in H_0^2(B)$ satisfying \eqref{pen}. Furthermore, with Lemma \ref{reg},
\begin{equation}\label{bound}
\|u_\varepsilon\|_{W^{4,p}(B)}\leq C_p \|\Delta^2\psi\|_{L^p(B)}\quad\mbox{for any}\quad 1<p<\frac4{4-\alpha}.
\end{equation}
We claim that $u_\varepsilon\in K_D$, which is equivalent to prove that $\zeta=0$, with $\zeta:=u_\varepsilon-\max(u_\varepsilon,\psi)\leq0$. Since 
$$\int_B\Big(\Delta u_\varepsilon\Delta\zeta-\Delta^2\psi\theta_\varepsilon(u_\varepsilon-\psi)\zeta\Big)dx=0,$$
we have 
$$\int_B\Delta(u_\varepsilon-\psi)\Delta\zeta dx=\int_B\Delta^2\psi(\theta_\varepsilon(u_\varepsilon-\psi)-1)\zeta dx.$$
Which implies that 
$$\int_B(\Delta\zeta)^2 dx=\int_{\zeta<0}\Delta^2\psi(\theta_\varepsilon(u_\varepsilon-\psi)-1)\zeta dx.$$
Now, $\zeta<0$ implies that $u_\varepsilon-\psi<0$ and $\theta_\varepsilon(u_\varepsilon-\psi)=1$. Thus $\zeta=0$ and $u_\varepsilon\in K_D$.\\
With \eqref{bound}, $u_\varepsilon\rightharpoonup \tilde u$ in $W^{4,p}(B)$ for any $1<p<\frac4{4-\alpha}$. Moreover, $\tilde u\in K_D$ because $u_\varepsilon\in K_D$. \\
Let prove that $\tilde u$ is solution to \eqref{min}. Let $v\in K_D$ such that $v\geq\psi+\delta$ for some $\delta>0$. Recall that $<Lu_\varepsilon,v-u>=0$. Applying a Minty's argument (see for example Lemma 1.5 of chapter III in \cite{ks}), yields $<Lv,v-u_\varepsilon>\geq 0$. Which implies that
$$\int_B\Delta v\Delta(v-u_\varepsilon)dx\geq\int_B\Delta^2\psi\theta_\varepsilon(v-\psi)(v-u_\varepsilon)dx.$$
If $\varepsilon<\delta$, then $\theta_\varepsilon(v-\psi)=0$. So, taking $\varepsilon\rightarrow0$ then $\delta\rightarrow0$, we have
$$\int_B\Delta v\Delta(v-\tilde u)dx\geq0\quad\mbox{ for any }\quad v\geq\psi.$$
Applying a second time the same argument of Minty, we conclude that $\tilde u=u^*$ is the solution to the minimizing problem \eqref{min}.
\end{proof}

\end{document}